\newtheorem{theorem}{Theorem}[section]
\newtheorem{proposition}[theorem]{Proposition}
\newtheorem{corollary}[theorem]{Corollary}
\theoremstyle{definition}
\newtheorem{definition}[theorem]{Definition}
\theoremstyle{remark}
\numberwithin{equation}{section}
\begin{document}
\title[On the composition ideals ]{On the composition ideals of Lipschitz
mappings}
\author[K. Saadi ]{Khalil Saadi}
\address{University of M'sila, Laboratoire d'Analyse Fonctionnelle et G\'{e}%
om\'{e}trie des Espaces, 28000 M'sila, Algeria.}
\email{kh\_saadi@yahoo.fr}
\subjclass[2010]{Primary 47B10; Secondary 26A16, 47L20, 46B28 and 46M05.}
\keywords{Lipschitz operator ideals, Lipschitz tensor product, (strictly)
Lipschitz summing operators; factorization of Lipschitz mappings.}
\date{Received: xxxxxx; Revised: yyyyyy; Accepted: zzzzzz. }

\begin{abstract}
In this paper, we study some property of Lipschitz mappings which admit
factorization through an operator ideal. Lipschitz cross-norms has been
established from known tensor norms in order to represent certain classes of
Lipschitz mappings. Inspired by the definition of $p$-summing linear
operators, we derive a new class of Lipschitz mappings that is called
strictly Lipschitz $p$-summing.
\end{abstract}

\maketitle

\setcounter{page}{1}


\let\thefootnote\relax\footnote{%
Copyright 2016 by the Tusi Mathematical Research Group.}

\section{Introduction and preliminaries}

Let $X$ be a metric space and $E$ be a Banach space. Every Lipschitz mapping 
$T:X\longrightarrow E$ admits a factorization of the form%
\begin{equation}
T=\widehat{T}\circ \delta _{X},  \label{1.1}
\end{equation}%
where $\widehat{T}$ is the linearization of $T$ and $\delta _{X}$ is the
canonical embedding. Let $\mathcal{I}$ be an operator ideal, there is a
constructive method for defining new classes of Lipschitz mappings which it
consists in composing of linear operators of $\mathcal{I}$ and Lipschitz
mappings, the resulting space is denoted by $\mathcal{I}\circ Lip_{0}.$ This
technique is usually used to generate some ideals of multilinear mappings
and homogeneous polynomials (see \cite{2,11}). The study of the space $%
\mathcal{I}\circ Lip_{0}$ is well-motivated, many interesting spaces which
are resulting by this technique belong to famous classes of Lipschitz
mappings. This is the case, for example, of the spaces of Lipschitz-Cohen
strongly $p$-summing, Lipschitz compact, Lipschitz weakly compact, strongly
Lipschitz $p$-integral and strongly Lipschitz $p$-nuclear operators.
Moreover, the appearance of a linear operator and a Lipschitz mapping in the
formula \eqref{1.1} motivates us to investigate the connection between the
Lipschitz operator $T$ and its linearization. Given an operator ideal $%
\mathcal{I}$, by considering the correspondence $T\leftrightarrow \widehat{T}
$ we can obtain the following identification%
\begin{equation}
\mathcal{I}\circ Lip_{0}\left( X;E\right) =\mathcal{I}\left( \mathcal{F}%
\left( X\right) ;E\right) .  \label{1.2}
\end{equation}%
In this paper, our main objective is to derive and study new classes of
Lipschitz mappings which satisfy \eqref{1.2}$.$ We are interested to
represent these classes by using Lipschitz cross-norms, such norms have been
recently studied by Cabrera-Padilla et al., in \cite{3}$.$ First, we
established some relations between tensor norms which defined on tensor
product and Lipschitz cross-norms and we prove that every tensor norm
generates a Lipschitz cross-norm. So, if $\mathcal{I}$ is an operator ideal
which admits a representation through a tensor norm $\alpha ,$ i.e., 
\begin{equation*}
\mathcal{I}\left( E;F^{\ast }\right) =(E\widehat{\otimes }_{\alpha }F)^{\ast
},
\end{equation*}%
for every Banach spaces $E,F,$ then there is a Lipschitz cross-norm $\alpha
^{L}$\ for which the space $\mathcal{I}\circ Lip_{0}$ admits a Lipschitz
tensor representation, i.e.,%
\begin{equation*}
\mathcal{I}\circ Lip_{0}\left( X;E^{\ast }\right) =(X\widehat{\boxtimes }%
_{\alpha ^{L}}E)^{\ast },
\end{equation*}%
for every metric space $X$ and every Banach space $E$. Among our results, we
will investigate the Lipschitz cross-norms corresponding to Chevet-Saphar
norms. We will define a new concept in the category of Lipschitz operators,
that is Lipschitz strictly $p$-summing. The operators of this class have a
strong relationship with their linearizations for the concept of $p$%
-summing. Certain results and properties of this new class are obtained.%
\vspace{0.5cm}%

This paper is organized as follows. First, we recall some standard notations
which will be used throughout. In section 2, we define for a given operator
ideal $\mathcal{I}$, the class $\mathcal{I}\circ Lip_{0}$ of Lipschitz
mappings satisfying that their linearizations belong to $\mathcal{I}.$ Some
examples of classes of Lipschitz mappings which are represented by this
procedure are given. Section 3 contains the main results, we start by
studying Lipschitz cross-norms generated by tensor norms. Then, we consider
the Chevet-Saphar norms and we study the corresponding Lipschitz
cross-norms. Inspired by the definition of $p$-summing we introduce the
concept of Lipschitz strictly $p$-summing for which we prove that the
Lipschitz mapping $T$ is strictly $p$-summing if and only if its
linearization $\widehat{T}$\ is $p$-summing. This notion coincides with the
notions of $p$-summing and Lipschitz $p$-summing operators when we are
considering only linear operators.%
\vspace{0.5cm}%

Now, we recall briefly some basic notations and terminology. Throughout this
paper, the letters $E,F$ will denote Banach spaces and $X,Y$ will denote
metric spaces with a distinguished point (pointed metric spaces) which we
denote by $0$. Let $E$ be a Banach space and $n\in \mathbb{N}$. We denote by 
$l_{p}^{n}\left( E\right) ,$ $\left( 1\leq p\leq \infty \right) ,$ the space
of all sequences $\left( x_{i}\right) _{1\leq i\leq n}$ in $E$ equipped with
the norm%
\begin{equation*}
\left\Vert \left( x_{i}\right) _{1\leq i\leq n}\right\Vert _{l_{p}^{n}\left(
E\right) }=(\sum\limits_{1}^{n}\left\Vert x_{i}\right\Vert ^{p})^{\frac{1}{p}%
},
\end{equation*}%
and by $l_{p}^{n~\omega }\left( E\right) $ the space of all sequences $%
\left( x_{i}\right) _{1\leq i\leq n}$ in $E$ equipped with the norm%
\begin{equation*}
\left\Vert \left( x_{i}\right) _{1\leq i\leq n}\right\Vert _{l_{p}^{n~\omega
}\left( E\right) }=\underset{\left\Vert x^{\ast }\right\Vert _{E^{\ast }}=1}{%
\sup }(\sum\limits_{1}^{n}\left\vert x^{\ast }\left( x_{i}\right)
\right\vert ^{p})^{\frac{1}{p}}
\end{equation*}%
Let $X$ be a pointed metric space. We denote by $X^{\#}$ the Banach space of
all Lipschitz functions $f:X\longrightarrow \mathbb{R}$ which vanish at $0$
under the Lipschitz norm given by%
\begin{equation*}
Lip\left( f\right) =\sup \left\{ \frac{\left\vert f\left( x\right) -f\left(
y\right) \right\vert }{d\left( x,y\right) }:x,y\in X,x\neq y\right\} .
\end{equation*}%
We denote by $\mathcal{F}\left( X\right) $ the free Banach space over $X$,
i.e., $\mathcal{F}\left( X\right) $ is the completion of the space%
\begin{equation*}
AE=\left\{ \sum_{i=1}^{n}\lambda _{i}\delta _{\left( x_{i},y_{i}\right)
},\left( \lambda _{i}\right) _{i=1}^{n}\subset \mathbb{R},\text{ }\left(
x_{i}\right) _{i=1}^{n},\left( y_{i}\right) _{i=1}^{n}\subset X\right\} ,
\end{equation*}%
with the norm%
\begin{equation*}
\left\Vert m\right\Vert _{\mathcal{F}\left( X\right) }=\inf \left\{
\sum_{i=1}^{n}\left\vert \lambda _{i}\right\vert d\left( x_{i},y_{i}\right)
:m=\sum_{i=1}^{n}\lambda _{i}\delta _{\left( x_{i},y_{i}\right) }\right\} ,
\end{equation*}%
where the function $\delta _{\left( x,y\right) }:X^{\#}\rightarrow \mathbb{R}
$ is defined as follows%
\begin{equation*}
\delta _{\left( x,y\right) }\left( f\right) =f\left( x\right) -f\left(
y\right) .%
\vspace{0.5cm}%
\end{equation*}%
We have $\mathcal{F}\left( X\right) ^{\ast }=X^{\#}.$ For a general theory
of free Banach space see \cite{10,11,16}. Let $X$ be a metric space and $E$
be a Banach space, we denote by $Lip_{0}\left( X;E\right) $ the Banach space
of all Lipschitz functions $T:X\rightarrow E$ such that $T\left( 0\right) =0$
with pointwise addition and Lipschitz norm. Note that for any $T\in
Lip_{0}\left( X;E\right) $ there exists a unique linear map (linearization
of $T$) $\widehat{T}:\mathcal{F}\left( X\right) \longrightarrow E$ such that 
$\widehat{T}\circ \delta _{X}=T$ and $\left\Vert \widehat{T}\right\Vert
=Lip\left( T\right) ,$ i.e., the following diagram commutes%
\begin{equation}
\begin{array}{ccc}
X & \overset{T}{\longrightarrow } & E \\ 
\delta _{X}\downarrow  & \nearrow \widehat{T} &  \\ 
\mathcal{F}\left( X\right)  &  & 
\end{array}
\label{1.3}
\end{equation}%
where $\delta _{X}$ is the canonical embedding so that $\left\langle \delta
_{X}\left( x\right) ,f\right\rangle =\left\langle \delta _{\left( x,0\right)
},f\right\rangle =f\left( x\right) $ for $f\in X^{\#}.$ The Lipschitz
transpose map of a Lipschitz operator $T:X\rightarrow E$ is a linear
operator $T^{t}:E^{\ast }\rightarrow X^{\#}$ which is defined by%
\begin{equation*}
T^{t}\left( e^{\ast }\right) \left( x\right) =e^{\ast }\left( T\left(
x\right) \right) .
\end{equation*}%
We have%
\begin{equation*}
T^{t}=Q_{X}^{-1}\circ \widehat{T}^{\ast },
\end{equation*}%
where $Q_{X}$ is the isomorphism isometric between $X^{\#}$ and $\mathcal{F}%
\left( X\right) ^{\ast }$ such that%
\begin{equation*}
Q_{X}\left( f\right) \left( m\right) =m\left( f\right) ,\text{ for every }%
f\in X^{\#}\text{ and }m\in \mathcal{F}\left( X\right) .
\end{equation*}%
If $X$ is a Banach space and $T:X\rightarrow E$ is a linear operator, then
the corresponding linear operator $\widehat{T}$ is given by%
\begin{equation}
\widehat{T}=T\circ \beta _{X},  \label{1.4}
\end{equation}%
where $\beta _{X}:\mathcal{F}\left( X\right) \rightarrow X$ is linear
quotient map which verifies $\beta _{X}\circ \delta _{X}=id_{X}$ and $%
\left\Vert \beta _{X}\right\Vert \leq 1,$ see \cite[p 124]{13} for more
details about the operator $\beta _{X}$. Let $X$ be a metric space and $E$
be a Banach space, by $X\boxtimes E$ we denote the Lipschitz tensor product
of $X$ and $E$. This is the vector space spanned by the linear functional $%
\delta _{\left( x,y\right) }\boxtimes e$ on $Lip_{0}\left( X;E^{\ast
}\right) $ defined by%
\begin{equation*}
\delta _{\left( x,y\right) }\boxtimes e\left( f\right) =\left\langle f\left(
x\right) -f\left( y\right) ,e\right\rangle .
\end{equation*}%
Let $\alpha $ be a norm on $X\boxtimes E;$ $\alpha $ is called \textit{%
Lipschitz cross-norm} if it satisfies the condition%
\begin{equation*}
\alpha \left( \delta _{\left( x,y\right) }\boxtimes e\right) =d\left(
x,y\right) \left\Vert e\right\Vert .
\end{equation*}%
A Lipschitz cross-norm $\alpha $ is called \textit{dualizable} if given $%
f\in X^{\#}$ and $e^{\ast }\in E^{\ast }$, for all $\dsum\limits_{i=1}^{n}%
\delta _{\left( x_{i},y_{i}\right) }\boxtimes e_{i}\in X\boxtimes E,$ we
have 
\begin{equation*}
\left\vert \dsum\limits_{i=1}^{n}\left( f\left( x_{i}\right) -f\left(
y_{i}\right) \right) \left\langle e^{\ast },e_{i}\right\rangle \right\vert
\leq Lip\left( f\right) \left\Vert e^{\ast }\right\Vert \alpha \left(
\dsum\limits_{i=1}^{n}\delta _{\left( x_{i},y_{i}\right) }\boxtimes
e_{i}\right) .
\end{equation*}%
Every Lipschitz mapping $T:X\rightarrow E^{\ast }$ admits a linear
functional $\varphi _{T}$ defined on the Lipschitz tensor product $%
X\boxtimes E$ by%
\begin{equation*}
\varphi _{T}(\sum_{i=1}^{n}\delta _{\left( x_{i},y_{i}\right) }\boxtimes
e_{i})=\sum_{i=1}^{n}\left\langle f\left( x_{i}\right) -f\left( y_{i}\right)
,e_{i}\right\rangle .
\end{equation*}%
As in \cite{1}, a subclass $\mathcal{A}$ of $Lip_{0}$ is to said a \textit{%
normed (Banach) Lipschitz ideal} if for every pointed metric space $X$ and
every Banach space $E,$ the pair $\left( \mathcal{A}\left( X;E\right)
,\left\Vert .\right\Vert _{\mathcal{A}}\right) $ is a normed (Banach) space
and

\noindent $\left( a\right) $ For every $f\in X^{\#}$ and $e\in E$ the
Lipschitz operator $f\boxtimes e:X\rightarrow E$ defined by $f\boxtimes
e\left( x\right) =f\left( x\right) e$ is in $\mathcal{A}\left( X;E\right) $
and 
\begin{equation*}
\left\Vert f\boxtimes e\right\Vert _{\mathcal{A}}\leq Lip\left( f\right)
\left\Vert e\right\Vert .
\end{equation*}%
\noindent $\left( b\right) $ For all $T\in \mathcal{A}\left( X;E\right) $,
we have 
\begin{equation*}
Lip\left( T\right) \leq \left\Vert T\right\Vert _{\mathcal{A}}.
\end{equation*}%
\noindent $\left( c\right) $ \textit{Ideal property}: Let $Z$ be a metric
space and $F$ a Banach space. The composed operator $u\circ T\circ g$ is in $%
\mathcal{A}\left( Z;F\right) $ and 
\begin{equation*}
\left\Vert u\circ T\circ g\right\Vert _{\mathcal{A}}\leq \left\Vert
u\right\Vert \left\Vert T\right\Vert _{\mathcal{A}}Lip\left( g\right)
\end{equation*}%
for every $g\in Lip_{0}\left( Z;X\right) $, $T\in \mathcal{A}\left(
X;E\right) $ and $u\in \mathcal{B}\left( E;F\right) ,(\mathcal{B}\left(
E;F\right) $ is the Banach space of all linear operators from $E$ into $F)$.

\section{Lipschitz spaces generated by the composition method}

In this section, we apply composition ideals technique to generate new
classes of Lipschitz mappings. Given an operator ideal $\mathcal{I}$, let $X$
be a pointed metric space and $E$ be a Banach space. A Lipschitz operator $%
T\in Lip_{0}\left( X;E\right) $ is said to be of type $\mathcal{I\circ }%
Lip_{0}$ if there exist a Banach space $Z,$ a Lipschitz operator $L\in
Lip_{0}\left( X;Z\right) $ and a linear operator $u\in \mathcal{I}\left(
Z;E\right) $ such that the following diagram commutes%
\begin{equation*}
\begin{array}{ccc}
X & \overset{T}{\longrightarrow } & E \\ 
L\searrow &  & \nearrow u \\ 
& Z & 
\end{array}%
\end{equation*}%
On the other hand, $T=L\circ u.$ If $\left( \mathcal{I},\left\Vert
.\right\Vert _{\mathcal{I}}\right) $ is a normed (Banach) ideal, the space $%
\mathcal{I}\circ Lip_{0}\left( X;E\right) $ is a normed (Banach) Lipschitz
ideal with respect to the following norm%
\begin{equation*}
\left\Vert T\right\Vert _{\mathcal{I}\circ Lip_{0}}=\inf Lip\left( L\right)
\left\Vert u\right\Vert _{\mathcal{I}}.
\end{equation*}%
In \cite{1}$,$ the connection between the Lipschitz operators of $\mathcal{I}%
\circ Lip_{0}$ and their linearizations is established.

\begin{theorem}
\label{Theorem 2.1}\cite{1} \textit{Let} $\mathcal{I}\circ Lip_{0}$ \textit{%
be the space of Lipschitz mappings} \textit{generated by the normed operator
ideal} $\mathcal{I}$. \textit{The following properties are equivalent.}

$\left( 1\right) $ \textit{The Lipschitz operator }$T$\textit{\ belongs to} $%
\mathcal{I}\circ Lip_{0}\left( X;E\right) .$

$\left( 2\right) $ \textit{The linearization }$\widehat{T}$\textit{\ belongs
to }$\mathcal{I}\left( \mathcal{F}\left( X\right) ;E\right) .$

\textit{In this case we have }$\left\Vert T\right\Vert _{\mathcal{I}\circ
Lip_{0}}=\left\Vert \widehat{T}\right\Vert _{\mathcal{I}}$ \textit{and then }%
\begin{equation*}
\mathcal{I}\circ Lip_{0}\left( X;E\right) =\mathcal{I}\left( \mathcal{F}%
\left( X\right) ;E\right) \text{ \textit{holds isometrically}}.
\end{equation*}
\end{theorem}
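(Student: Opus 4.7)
The plan is to establish both implications by invoking the universal property of the free Banach space together with the ideal property of $\mathcal{I}$, and to track the norm estimates at each step so the claimed isometric identity drops out.

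For the implication $(1)\Rightarrow(2)$, I would start from a factorization $T=u\circ L$ with $L\in Lip_{0}(X;Z)$ and $u\in\mathcal{I}(Z;E)$, and pass to linearizations: since $\widehat{T}\circ\delta_{X}=T=u\circ L=u\circ\widehat{L}\circ\delta_{X}$ and linearizations are unique, we get $\widehat{T}=u\circ\widehat{L}$ on $\mathcal{F}(X)$. The ideal property of $\mathcal{I}$ then puts $\widehat{T}$ in $\mathcal{I}(\mathcal{F}(X);E)$ with
\begin{equation*}
\lVert\widehat{T}\rVert_{\mathcal{I}}\leq \lVert u\rVert_{\mathcal{I}}\,\lVert\widehat{L}\rVert = \lVert u\rVert_{\mathcal{I}}\,Lip(L).
\end{equation*}
Taking the infimum over all admissible factorizations yields $\lVert\widehat{T}\rVert_{\mathcal{I}}\leq\lVert T\rVert_{\mathcal{I}\circ Lip_{0}}$.

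For the converse $(2)\Rightarrow(1)$, I would simply exhibit the canonical factorization already recorded in diagram \eqref{1.3}, namely $T=\widehat{T}\circ\delta_{X}$, taking $Z=\mathcal{F}(X)$, $L=\delta_{X}\in Lip_{0}(X;\mathcal{F}(X))$ (with $Lip(\delta_{X})=1$), and $u=\widehat{T}\in\mathcal{I}(\mathcal{F}(X);E)$ by hypothesis. This immediately gives $T\in\mathcal{I}\circ Lip_{0}(X;E)$ and, from the definition of the composition norm,
\begin{equation*}
\lVert T\rVert_{\mathcal{I}\circ Lip_{0}}\leq Lip(\delta_{X})\,\lVert\widehat{T}\rVert_{\mathcal{I}}=\lVert\widehat{T}\rVert_{\mathcal{I}}.
\end{equation*}
Combining the two inequalities produces the isometric identity $\lVert T\rVert_{\mathcal{I}\circ Lip_{0}}=\lVert\widehat{T}\rVert_{\mathcal{I}}$.

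There is essentially no analytic obstacle here; the proof is a bookkeeping exercise in the correspondence $T\leftrightarrow\widehat{T}$. The only subtlety worth spelling out is that in the first implication one must justify replacing $L$ by $\widehat{L}$ before invoking the ideal property—this is where uniqueness of the linearization is used, together with the fact that $\lVert\widehat{L}\rVert=Lip(L)$, which is exactly the isometric property of the free-space construction recalled in the introduction. Once that point is in place, the rest is a direct application of the two-sided ideal axiom for $\mathcal{I}$ and the definition of $\lVert\cdot\rVert_{\mathcal{I}\circ Lip_{0}}$.
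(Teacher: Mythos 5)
Your argument is correct and complete: both directions, the use of uniqueness of the linearization to get $\widehat{T}=u\circ\widehat{L}$, and the two norm inequalities are exactly the standard proof of this identification. Note that the paper itself gives no proof of Theorem \ref{Theorem 2.1} (it is quoted from \cite{1}), so there is nothing to diverge from; your write-up is the expected argument.
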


\begin{proposition}
\label{Proposition 2.2}\textit{Let} $\mathcal{I}_{2}$, $\mathcal{I}_{2}$ 
\textit{be two operator ideals. Then,}

$\left( 1\right) $ \textit{If} $\mathcal{I}_{1}\circ Lip_{0}\left(
X;E\right) \subset\mathcal{I}_{2}\circ Lip_{0}\left( X;E\right) $ \textit{%
then,} $\mathcal{I}_{1}\left( \mathcal{F}\left( X\right) ;E\right) \subset%
\mathcal{I}_{2}\left( \mathcal{F}\left( X\right) ;E\right) .$

$\left( 2\right) $ \textit{If} $\mathcal{I}\circ Lip_{0}\left( X;E\right)
=Lip_{0}\left( X;E\right) $ \textit{then,} $\mathcal{I}\left( \mathcal{F}%
\left( X\right) ;E\right) =\mathcal{B}\left( \mathcal{F}\left( X\right)
;E\right) .$
\end{proposition}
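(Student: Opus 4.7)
The plan is to reduce both parts to Theorem~\ref{Theorem 2.1} via the correspondence $T \leftrightarrow \widehat{T}$. The key auxiliary observation is that the linearization map $T \mapsto \widehat{T}$ is surjective onto $\mathcal{B}(\mathcal{F}(X);E)$: every bounded linear operator $u:\mathcal{F}(X)\to E$ arises as the linearization of some Lipschitz mapping. Indeed, given such a $u$, set $T_{u}:=u\circ \delta_{X}\in Lip_{0}(X;E)$. By uniqueness in the factorization \eqref{1.3}, we have $\widehat{T_{u}}=u$.

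For part $(1)$, I would take an arbitrary $u\in \mathcal{I}_{1}(\mathcal{F}(X);E)$ and form $T_{u}=u\circ \delta_{X}$. Since $\widehat{T_{u}}=u\in \mathcal{I}_{1}(\mathcal{F}(X);E)$, Theorem~\ref{Theorem 2.1} applied to $\mathcal{I}_{1}$ gives $T_{u}\in \mathcal{I}_{1}\circ Lip_{0}(X;E)$. By hypothesis, $T_{u}\in \mathcal{I}_{2}\circ Lip_{0}(X;E)$, so a second application of Theorem~\ref{Theorem 2.1} (this time to $\mathcal{I}_{2}$) yields $u=\widehat{T_{u}}\in \mathcal{I}_{2}(\mathcal{F}(X);E)$, as required.

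For part $(2)$, the same argument works with $\mathcal{I}_{2}$ replaced by $\mathcal{B}$: given any $u\in \mathcal{B}(\mathcal{F}(X);E)$, consider again $T_{u}=u\circ \delta_{X}$. This lies in $Lip_{0}(X;E)$, which by hypothesis equals $\mathcal{I}\circ Lip_{0}(X;E)$, and Theorem~\ref{Theorem 2.1} then places $u=\widehat{T_{u}}$ in $\mathcal{I}(\mathcal{F}(X);E)$. The reverse inclusion $\mathcal{I}(\mathcal{F}(X);E)\subset \mathcal{B}(\mathcal{F}(X);E)$ is automatic.

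There is no real obstacle: the whole content is carried by Theorem~\ref{Theorem 2.1} together with the elementary surjectivity remark that every $u\in \mathcal{B}(\mathcal{F}(X);E)$ is a linearization. The only point worth stating carefully is this surjectivity, which justifies that an inclusion on the Lipschitz side transfers faithfully to an inclusion on the linearization side.
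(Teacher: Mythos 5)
Your argument is correct and coincides with the paper's own proof: both parts are obtained by taking $T_{u}=u\circ\delta_{X}$, noting $\widehat{T_{u}}=u$, and applying Theorem~\ref{Theorem 2.1} to transfer membership between the Lipschitz ideal and the operator ideal. Your explicit remark on the surjectivity of $T\mapsto\widehat{T}$ is exactly the (implicit) key step in the paper's proof.
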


\begin{proof}
$\left( 1\right) $ Let $u\in \mathcal{I}_{1}\left( \mathcal{F}\left(
X\right) ;E\right) $, then the Lipschitz operator $T=u\circ \delta
_{X}:X\longrightarrow E$ verifies%
\begin{equation*}
\widehat{T}=u.
\end{equation*}%
By Theorem \ref{Theorem 2.1}$,$ $T\in \mathcal{I}_{1}\circ Lip_{0}\left(
X;E\right) $ and hence $T\in \mathcal{I}_{2}\circ Lip_{0}\left( X;E\right) .$
Consequently $\widehat{T}=u\in \mathcal{I}_{2}\left( \mathcal{F}\left(
X\right) ;E\right) .$

$\left( 2\right) $ Let $u\in\mathcal{B}\left( \mathcal{F}\left( X\right)
;E\right) ,$ then $T=u\circ\delta_{X}\in Lip_{0}\left( X;E\right) .$ Hence%
\begin{equation*}
\widehat{T}=u\in\mathcal{I}\left( \mathcal{F}\left( X\right) ;E\right) .
\end{equation*}
\end{proof}

The next Proposition follows directly from the previous one.

\begin{proposition}
\label{Proposition 2.3}\textit{Let }$E$\textit{\ be a Banach space. The
following properties are equivalent.}

$\left( 1\right) $ $id_{E}\in\mathcal{I}\left( E;E\right) .$

$\left( 2\right) $ $\mathcal{I}\circ Lip_{0}\left( X;E\right) =Lip_{0}\left(
X;E\right) $ \textit{for every pointed metric space }$X.$
\end{proposition}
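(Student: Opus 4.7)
My plan is to derive both implications from the previous Proposition~\ref{Proposition 2.2} by specializing the pointed metric space to $X = E$ itself.

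For the direction (1) $\Rightarrow$ (2), I would take an arbitrary pointed metric space $X$ and any $T \in Lip_{0}(X;E)$, and simply exhibit the trivial factorization $T = id_{E}\circ T$. Since $T \colon X \to E$ is Lipschitz with $T(0)=0$ and $id_{E} \in \mathcal{I}(E;E)$ by hypothesis, this writes $T$ as a composition $u\circ L$ of the required form (with $Z=E$, $L=T$, $u=id_{E}$), so $T$ is of type $\mathcal{I}\circ Lip_{0}$. The reverse containment is built into the definition of the composition class, giving equality.

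For the direction (2) $\Rightarrow$ (1), I would apply Proposition~\ref{Proposition 2.2}~(2) with $X = E$ (viewed as a pointed metric space with basepoint $0$) to obtain $\mathcal{I}(\mathcal{F}(E);E) = \mathcal{B}(\mathcal{F}(E);E)$. In particular the canonical linear quotient $\beta_{E} \colon \mathcal{F}(E) \to E$ belongs to $\mathcal{I}$; since $\beta_{E} = \widehat{id_{E}}$ by formula~\eqref{1.4}, the correspondence $T \leftrightarrow \widehat{T}$ of Theorem~\ref{Theorem 2.1} places $id_{E}$ in $\mathcal{I}\circ Lip_{0}(E;E)$, which one then reads off as $id_{E} \in \mathcal{I}(E;E)$ because the Lipschitz operator at hand is linear and the composition class reduces to $\mathcal{I}$ in that case.

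The main delicate point I anticipate is the very last step of (2) $\Rightarrow$ (1): transporting the membership of the linearization $\beta_{E}$ in $\mathcal{I}(\mathcal{F}(E);E)$ back to a membership of $id_{E}$ in $\mathcal{I}(E;E)$. The author's indication that the proposition \emph{follows directly from the previous one} suggests this transport is handled purely through the isometric identification $\mathcal{I}\circ Lip_{0}(E;E) = \mathcal{I}(\mathcal{F}(E);E)$ of Theorem~\ref{Theorem 2.1}, together with the ideal axioms applied to the surjective quotient $\beta_{E}$. Once that identification is accepted, the remainder of the argument is routine bookkeeping with the definitions of $\mathcal{I}\circ Lip_{0}$, $\mathcal{F}(E)$, and $\beta_{E}$.
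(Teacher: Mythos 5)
Your first implication is fine: given $id_{E}\in\mathcal{I}\left( E;E\right) $, the factorization $T=id_{E}\circ T$ exhibits any $T\in Lip_{0}\left( X;E\right) $ as an element of $\mathcal{I}\circ Lip_{0}\left( X;E\right) $, and the reverse inclusion is definitional. Since the paper records no argument beyond the remark that the proposition ``follows directly from the previous one,'' there is nothing more specific to compare against on this half.

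The converse direction, however, has a genuine gap at exactly the point you flag and then dismiss. From hypothesis $\left( 2\right) $ with $X=E$ you correctly obtain, via Proposition \ref{Proposition 2.2} or Theorem \ref{Theorem 2.1}, that $\widehat{id_{E}}=\beta _{E}$ belongs to $\mathcal{I}\left( \mathcal{F}\left( E\right) ;E\right) $. But the identification $\mathcal{I}\circ Lip_{0}\left( E;E\right) =\mathcal{I}\left( \mathcal{F}\left( E\right) ;E\right) $ matches $T$ with $\widehat{T}$, not with $T$ itself, so membership of $id_{E}$ in $\mathcal{I}\circ Lip_{0}\left( E;E\right) $ says nothing beyond $\beta _{E}\in \mathcal{I}$; the assertion that ``the composition class reduces to $\mathcal{I}$'' on linear operators is precisely what is to be proved, not something one can read off. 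Nor do ``the ideal axioms applied to the surjective quotient $\beta _{E}$'' suffice: the ideal property only allows composing $\beta _{E}$ with bounded linear operators on either side, and to manufacture $id_{E}$ this way one needs a bounded \emph{linear} right inverse $U:E\rightarrow \mathcal{F}\left( E\right) $ with $\beta _{E}\circ U=id_{E}$. Such a $U$ does exist --- Godefroy and Kalton \cite{13} show that every Banach space $E$ is isometric to a norm-one complemented subspace of $\mathcal{F}\left( E\right) $ via a linear isometric section of $\beta _{E}$ --- and once it is invoked the conclusion $id_{E}=\beta _{E}\circ U\in \mathcal{I}\left( E;E\right) $ follows at once from the ideal property. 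That section is a substantive theorem which must be cited explicitly; without it the final step of your argument does not close.
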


As in the linear case studied in \cite{1}\ we give the definition of
Lipschitz dual of a given operator ideal.

\begin{definition}
\cite{1} The Lipschitz dual of a given operator ideal $\mathcal{I}$ is
defined by%
\begin{equation*}
\mathcal{I}^{Lip_{0}-dual}\left( X;E\right) =\left\{ T\in Lip_{0}\left(
X;E\right) :T^{t}\in \mathcal{I}\left( E^{\ast };X^{\#}\right) \right\} .
\end{equation*}%
If $\left( \mathcal{I},\left\Vert .\right\Vert _{\mathcal{I}}\right) $ is a
normed (Banach) ideal, define%
\begin{equation*}
\left\Vert T\right\Vert _{\mathcal{I}^{Lip_{0}-dual}}=\left\Vert
T^{t}\right\Vert _{\mathcal{I}},
\end{equation*}%
then, the space $\mathcal{I}^{Lip_{0}-dual}\left( X;E\right) $ becomes a
normed (Banach) Lipschitz ideal.
\end{definition}

An operator ideal\ $\mathcal{I}$\ is symmetric if 
\begin{equation*}
\mathcal{I}=\mathcal{I}^{dual}=\left\{ u\in \mathcal{I}\left( F;G\right)
:u^{\ast }\in \mathcal{I}\left( G^{\ast };F^{\ast }\right) \right\} .
\end{equation*}%
If $\mathcal{I}$ is symmetric, we have the following coincidence between a
Lipschitz ideal and its dual.

\begin{proposition}
\label{Proposition 2.5}\textit{The following properties are equivalent.}

$\left( 1\right) $ $\mathcal{I}$ \textit{is symmetric.}

$\left( 2\right) $ $\mathcal{I}^{Lip_{0}-dual}\left( X;E\right) =\mathcal{%
I\circ}Lip_{0}\left( X;E\right) $ \textit{for every pointed metric space }$X$%
\textit{\ and Banach space }$E.$
\end{proposition}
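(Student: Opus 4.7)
The plan is to reduce both conditions of Proposition \ref{Proposition 2.5} to statements about the linearization $\widehat{T}$ and its linear adjoint, using Theorem \ref{Theorem 2.1} together with the identity $T^{t}=Q_{X}^{-1}\circ \widehat{T}^{\ast }$ from the preliminaries. Since $Q_{X}$ is an isometric isomorphism between $X^{\#}$ and $\mathcal{F}(X)^{\ast }$, the ideal property gives $T^{t}\in \mathcal{I}(E^{\ast };X^{\#})$ if and only if $\widehat{T}^{\ast }\in \mathcal{I}(E^{\ast };\mathcal{F}(X)^{\ast })$, while Theorem \ref{Theorem 2.1} identifies $T\in \mathcal{I}\circ Lip_{0}(X;E)$ with $\widehat{T}\in \mathcal{I}(\mathcal{F}(X);E)$. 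Both conditions in the proposition therefore become statements about whether $\widehat{T}$ and $\widehat{T}^{\ast }$ lie in $\mathcal{I}$.

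For $(1)\Rightarrow (2)$, I would fix $X$, $E$ and $T\in Lip_{0}(X;E)$, and chain the equivalences above with the symmetry of $\mathcal{I}$ (which says $\widehat{T}\in \mathcal{I}$ iff $\widehat{T}^{\ast }\in \mathcal{I}$):
\begin{equation*}
T\in \mathcal{I}^{Lip_{0}-dual}(X;E)\Longleftrightarrow T^{t}\in \mathcal{I}\Longleftrightarrow \widehat{T}^{\ast }\in \mathcal{I}\Longleftrightarrow \widehat{T}\in \mathcal{I}\Longleftrightarrow T\in \mathcal{I}\circ Lip_{0}(X;E).
\end{equation*}

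For the converse $(2)\Rightarrow (1)$, I would take arbitrary Banach spaces $F$, $G$ and a bounded linear $u\in \mathcal{B}(F;G)$, view $F$ as a pointed metric space with base point $0$ and $u$ as an element of $Lip_{0}(F;G)$, and apply the hypothesis at this particular $X=F$, $E=G$, $T=u$. By \eqref{1.4} the linearization is $\widehat{u}=u\circ \beta_{F}$, and because $u$ is linear its Lipschitz transpose collapses to $u^{t}=j_{F}\circ u^{\ast }$, where $j_{F}\colon F^{\ast }\hookrightarrow F^{\#}$ is the canonical isometric embedding of linear functionals into Lipschitz functions vanishing at $0$. Condition (2) thus yields $u\circ \beta_{F}\in \mathcal{I}(\mathcal{F}(F);G)$ if and only if $j_{F}\circ u^{\ast }\in \mathcal{I}(G^{\ast };F^{\#})$, and the desired symmetry $u\in \mathcal{I}\Leftrightarrow u^{\ast }\in \mathcal{I}$ must be read off from this single equivalence by cancelling $\beta_{F}$ on the domain side and $j_{F}$ on the codomain side.

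The main obstacle is exactly this cancellation step. The two ``easy'' implications $u\in \mathcal{I}\Rightarrow u\circ \beta_{F}\in \mathcal{I}$ and $u^{\ast }\in \mathcal{I}\Rightarrow j_{F}\circ u^{\ast }\in \mathcal{I}$ are immediate from the ideal property, so the real work is in the converses: recovering $u\in \mathcal{I}(F;G)$ from $\widehat{u}\in \mathcal{I}(\mathcal{F}(F);G)$ (for which I would exploit that $\beta_{F}$ is a metric quotient with Lipschitz right inverse $\delta_{F}$, so that $u=\widehat{u}\circ \delta_{F}$ and the image of the unit ball of $F$ is contained in $\widehat{u}(B_{\mathcal{F}(F)})$), and recovering $u^{\ast }\in \mathcal{I}(G^{\ast };F^{\ast })$ from $j_{F}\circ u^{\ast }\in \mathcal{I}(G^{\ast };F^{\#})$ (for which I would use that $j_{F}$ is an isometric identification of $F^{\ast }$ with a closed subspace of $F^{\#}$, so that the operator $j_{F}\circ u^{\ast }$ corestricts to $F^{\ast }$ without leaving $\mathcal{I}$). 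Once these reductions are carried out, the equivalence in (2) transfers directly to the equivalence $u\in \mathcal{I}\Leftrightarrow u^{\ast }\in \mathcal{I}$, i.e.\ to $\mathcal{I}=\mathcal{I}^{dual}$.
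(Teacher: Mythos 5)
The paper states this proposition without proof, so your argument has to be judged on its own terms. Your direction $(1)\Rightarrow(2)$ is correct: the chain of equivalences uses nothing beyond the definition of $\mathcal{I}^{Lip_{0}-dual}$, the identity $T^{t}=Q_{X}^{-1}\circ \widehat{T}^{\ast }$ together with the ideal property through the isometric isomorphism $Q_{X}$, the symmetry of $\mathcal{I}$, and Theorem \ref{Theorem 2.1}. For $(2)\Rightarrow(1)$ your reduction --- specialize to $X=F$ a Banach space and $T=u$ linear, so that $(2)$ becomes the single equivalence $u\circ \beta _{F}\in \mathcal{I}(\mathcal{F}(F);G)\Leftrightarrow j_{F}\circ u^{\ast }\in \mathcal{I}(G^{\ast };F^{\#})$ --- is the right move, but both of your cancellation arguments fail for a general operator ideal. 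The inclusion $u(B_{F})\subseteq \widehat{u}(B_{\mathcal{F}(F)})$ lets you pass from $\widehat{u}\in \mathcal{I}$ to $u\in \mathcal{I}$ only when membership in $\mathcal{I}$ is determined by images of unit balls (compact, weakly compact, \dots); it gives nothing for, say, the nuclear operators. Likewise, corestricting $j_{F}\circ u^{\ast }$ to the closed subspace $j_{F}(F^{\ast })$ of $F^{\#}$ is exactly the injectivity property of an operator ideal, which a general $\mathcal{I}$ lacks (nuclear operators again are a standard counterexample).

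Both steps are nevertheless true, and the missing ingredient is the same for both: the norm-one \emph{linear} section of $\beta _{F}$ due to Godefroy and Kalton \cite{13}, i.e.\ a linear map $\iota :F\rightarrow \mathcal{F}(F)$ with $\left\Vert \iota \right\Vert =1$ and $\beta _{F}\circ \iota =id_{F}$. Then $u=\widehat{u}\circ \iota $, so $\widehat{u}\in \mathcal{I}$ forces $u\in \mathcal{I}$ by the ideal property alone; and since $Q_{F}\circ j_{F}=\beta _{F}^{\ast }$, the operator $P=\iota ^{\ast }\circ Q_{F}:F^{\#}\rightarrow F^{\ast }$ satisfies $P\circ j_{F}=(\beta _{F}\circ \iota )^{\ast }=id_{F^{\ast }}$, whence $u^{\ast }=P\circ (j_{F}\circ u^{\ast })\in \mathcal{I}$. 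With these two substitutions your proof closes; without some such lifting or projection the two converse cancellations are genuine gaps, not routine applications of the ideal property.
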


Now, in the rest of this section we present some examples of classes of
Lipschitz mappings which generated by known operator ideals using the
composition method. In \cite{15}, we have introduced the class $\mathcal{D}%
_{p}^{L}$ of Lipschitz-Cohen strongly $p$-summing operators. Proposition 3.1
in \cite{15} asserts that $T:X\rightarrow E$ is Lipschitz-Cohen strongly $p$%
-summing if and only if its linearization $\widehat{T}$ is strongly $p$%
-summing. Combining with Theorem \ref{Theorem 2.1}, the class $\mathcal{D}%
_{p}^{L}$ can be interpreted in terms of the composition method as follows.

\begin{theorem}
\label{Theorem 2.6}\textit{Let }$X$\textit{\ be a pointed metric space and }$%
E$\textit{\ be a Banach space. Let }$1<p\leq \infty $ \textit{and let }$%
p^{\ast }$\textit{\ be its conjugate }$(\frac{1}{p}+\frac{1}{p^{\ast }}=1)$%
\textit{. We have}%
\begin{equation*}
\mathcal{D}_{p}^{L}\left( X;E\right) =\mathcal{D}_{p}\circ Lip_{0}\left(
X;E\right) =\Pi _{p^{\ast }}^{dual}\circ Lip_{0}\left( X;E\right) =\Pi
_{p^{\ast }}^{Lip_{0}-daul}\left( X;E\right) ,
\end{equation*}%
\textit{where }$\Pi _{p}$\textit{\ and }$\mathcal{D}_{p}$\textit{\ are the
classes of }$p$\textit{-summing and strongly }$p$-summing \textit{linear
operators, respectively.}
\end{theorem}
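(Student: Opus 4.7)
The plan is to prove the three equalities in order, chaining together three ingredients: (a) the characterization from Proposition 3.1 of \cite{15}, (b) Theorem~\ref{Theorem 2.1}, and (c) Cohen's classical identification of strongly $p$-summing operators as the dual ideal of $p^{\ast}$-summing ones.

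First I would establish the identity $\mathcal{D}_{p}^{L}(X;E)=\mathcal{D}_{p}\circ Lip_{0}(X;E)$. The cited Proposition 3.1 in \cite{15} says that $T\in Lip_{0}(X;E)$ is Lipschitz--Cohen strongly $p$-summing if and only if $\widehat{T}\in\mathcal{D}_{p}(\mathcal{F}(X);E)$. By Theorem~\ref{Theorem 2.1} applied to $\mathcal{I}=\mathcal{D}_{p}$, the latter condition is equivalent to $T\in\mathcal{D}_{p}\circ Lip_{0}(X;E)$, and the norms coincide isometrically.

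Next I would reduce the middle identity to Cohen's duality theorem at the linear level: for every pair of Banach spaces, $\mathcal{D}_{p}=\Pi_{p^{\ast}}^{dual}$ as normed operator ideals. Since $\mathcal{D}_{p}(\mathcal{F}(X);E)=\Pi_{p^{\ast}}^{dual}(\mathcal{F}(X);E)$, applying Theorem~\ref{Theorem 2.1} once to each ideal produces
\begin{equation*}
\mathcal{D}_{p}\circ Lip_{0}(X;E)=\mathcal{D}_{p}(\mathcal{F}(X);E)=\Pi_{p^{\ast}}^{dual}(\mathcal{F}(X);E)=\Pi_{p^{\ast}}^{dual}\circ Lip_{0}(X;E).
\end{equation*}

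The real content is the third equality $\Pi_{p^{\ast}}^{dual}\circ Lip_{0}(X;E)=\Pi_{p^{\ast}}^{Lip_{0}-dual}(X;E)$, and I expect this to be the main obstacle because it is the step where one must translate between the linearization-adjoint and the Lipschitz transpose. Using Theorem~\ref{Theorem 2.1} with $\mathcal{I}=\Pi_{p^{\ast}}^{dual}$, a mapping $T$ lies in $\Pi_{p^{\ast}}^{dual}\circ Lip_{0}(X;E)$ exactly when $\widehat{T}\in\Pi_{p^{\ast}}^{dual}(\mathcal{F}(X);E)$, i.e.\ when $\widehat{T}^{\ast}\in\Pi_{p^{\ast}}(E^{\ast};\mathcal{F}(X)^{\ast})$. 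On the other hand, $T\in\Pi_{p^{\ast}}^{Lip_{0}-dual}(X;E)$ precisely when $T^{t}\in\Pi_{p^{\ast}}(E^{\ast};X^{\#})$. The identity $T^{t}=Q_{X}^{-1}\circ\widehat{T}^{\ast}$ recalled after diagram~\eqref{1.3}, together with the fact that $Q_{X}\colon X^{\#}\to\mathcal{F}(X)^{\ast}$ is an isometric isomorphism, implies that $T^{t}$ and $\widehat{T}^{\ast}$ are simultaneously $p^{\ast}$-summing with equal $\Pi_{p^{\ast}}$-norms, because the $p^{\ast}$-summing ideal is invariant under composition with isometric isomorphisms on the right.

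Finally I would remark that the chain is isometric at each step: the first equality gives $\|T\|_{\mathcal{D}_{p}^{L}}=\|\widehat{T}\|_{\mathcal{D}_{p}}$, Cohen's theorem gives $\|\widehat{T}\|_{\mathcal{D}_{p}}=\|\widehat{T}^{\ast}\|_{\Pi_{p^{\ast}}}$, and the last step gives $\|\widehat{T}^{\ast}\|_{\Pi_{p^{\ast}}}=\|T^{t}\|_{\Pi_{p^{\ast}}}=\|T\|_{\Pi_{p^{\ast}}^{Lip_{0}-dual}}$, so all four spaces coincide isometrically.
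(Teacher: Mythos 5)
Your proposal is correct and follows essentially the same route the paper intends: the paper derives Theorem~\ref{Theorem 2.6} by combining Proposition 3.1 of \cite{15} with Theorem~\ref{Theorem 2.1}, the linear Cohen duality $\mathcal{D}_{p}=\Pi_{p^{\ast}}^{dual}$, and the identity $T^{t}=Q_{X}^{-1}\circ \widehat{T}^{\ast}$, exactly as you do. Your write-up simply makes explicit the steps the paper leaves implicit (the only quibble being that $Q_{X}^{-1}$ is composed on the left, not the right, which does not affect the argument).
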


The classes of Lipschitz compact and weakly compact operators have been
introduced in \cite{14}$.$ By $Lip_{0\mathcal{K}}$ and $Lip_{0\mathcal{W}}$\
we denote the Banach Lipschitz ideals of Lipschitz compact and Lipschitz
weakly compact operators, respectively. In \cite[Proposition 2.1 and 2.2]{14}%
, a similar relation as in Theorem \ref{Theorem 2.1} has been established.

\begin{theorem}
\label{Theorem 2.7}\textit{Let }$X$\textit{\ be a pointed metric space and }$%
E$\textit{\ be a Banach space. Then}

$\left( 1\right) $ $Lip_{0\mathcal{K}}\left( X;E\right) =\mathcal{K}\circ
Lip_{0}\left( X;E\right) =\mathcal{K}^{dual}\circ Lip_{0}\left( X;E\right) .$

$\left( 2\right) $ $Lip_{0\mathcal{W}}\left( X;E\right) =\mathcal{W}\circ
Lip_{0}\left( X;E\right) =\mathcal{W}^{dual}\circ Lip_{0}\left( X;E\right) .$
\end{theorem}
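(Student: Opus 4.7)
The plan is to reduce the statement to Theorem~\ref{Theorem 2.1} combined with the characterizations of Lipschitz (weakly) compact operators via their linearizations that are already available in the literature.

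First, I would apply Theorem~\ref{Theorem 2.1} with $\mathcal{I} = \mathcal{K}$ and with $\mathcal{I} = \mathcal{W}$, obtaining the isometric identifications
\[
\mathcal{K} \circ Lip_{0}(X;E) = \mathcal{K}(\mathcal{F}(X); E) \quad \text{and} \quad \mathcal{W} \circ Lip_{0}(X;E) = \mathcal{W}(\mathcal{F}(X); E).
\]
Second, I would invoke Propositions~2.1 and 2.2 of \cite{14}, which state that $T \in Lip_{0\mathcal{K}}(X;E)$ (respectively $T \in Lip_{0\mathcal{W}}(X;E)$) if and only if its linearization $\widehat{T}$ belongs to $\mathcal{K}(\mathcal{F}(X); E)$ (respectively $\mathcal{W}(\mathcal{F}(X); E)$). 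Combined with the first step, this yields the identities $Lip_{0\mathcal{K}}(X;E) = \mathcal{K} \circ Lip_{0}(X;E)$ and $Lip_{0\mathcal{W}}(X;E) = \mathcal{W} \circ Lip_{0}(X;E)$.

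For the remaining equalities involving $\mathcal{K}^{dual}$ and $\mathcal{W}^{dual}$, I would appeal to Schauder's theorem ($u$ compact if and only if $u^{\ast}$ compact) and Gantmacher's theorem ($u$ weakly compact if and only if $u^{\ast}$ weakly compact). These classical results express precisely that the ideals $\mathcal{K}$ and $\mathcal{W}$ are symmetric, i.e., $\mathcal{K} = \mathcal{K}^{dual}$ and $\mathcal{W} = \mathcal{W}^{dual}$, from which the identifications $\mathcal{K} \circ Lip_{0} = \mathcal{K}^{dual} \circ Lip_{0}$ and $\mathcal{W} \circ Lip_{0} = \mathcal{W}^{dual} \circ Lip_{0}$ follow immediately.

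There is no substantial obstacle here: every ingredient is either already established in \cite{14} or is part of the standard duality theory for operator ideals, and the only role of the present proof is to assemble these pieces via the composition-ideal machinery of Theorem~\ref{Theorem 2.1}.
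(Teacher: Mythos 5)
Your proposal is correct and matches the paper's intent exactly: the paper states Theorem \ref{Theorem 2.7} without a separate proof, presenting it as the combination of Theorem \ref{Theorem 2.1} with \cite[Propositions 2.1 and 2.2]{14}, and the equalities with the dual ideals rest on the symmetry of $\mathcal{K}$ and $\mathcal{W}$ via Schauder's and Gantmacher's theorems, just as you argue. Nothing is missing.
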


A simple consequence of the linear result given in \cite{8}\ asserts that
the Banach space $F$ is reflexive if and only if, for every Banach space $G$
and linear operator $v:F\rightarrow G$, $v$ is weakly compact. We have the
next characterization.

\begin{theorem}
\label{Theorem 2.8}\textit{Let }$X$\textit{\ be a pointed metric space. The
following properties are equivalent.}

$\left( 1\right) $ \textit{The metric space }$X$\textit{\ is finite}.

$\left( 2\right) $ \textit{For all Banach space }$E$\textit{, we have} $%
Lip_{0\mathcal{W}}\left( X;E\right) =Lip_{0}\left( X;E\right) .$
\end{theorem}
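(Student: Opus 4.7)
The plan is to reduce the statement to a reflexivity question about the free Banach space $\mathcal{F}(X)$. By Theorem~\ref{Theorem 2.7}, $Lip_{0\mathcal{W}}(X;E)=\mathcal{W}\circ Lip_0(X;E)$, and by Theorem~\ref{Theorem 2.1} this space is isometric to $\mathcal{W}(\mathcal{F}(X);E)$. Since $Lip_0(X;E)$ is itself isometric to $\mathcal{B}(\mathcal{F}(X);E)$ via linearization, condition $(2)$ is equivalent to
\[
\mathcal{W}(\mathcal{F}(X);E)=\mathcal{B}(\mathcal{F}(X);E)\quad\text{for every Banach space }E,
\]
which, by the linear characterization recalled from \cite{8} in the paragraph preceding the theorem, holds exactly when $\mathcal{F}(X)$ is reflexive. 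Thus the theorem reduces to the equivalence: $X$ is finite iff $\mathcal{F}(X)$ is reflexive.

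The direction $(1)\Rightarrow(2)$ is immediate: if $X$ has finitely many points then $\mathcal{F}(X)$ is spanned by the finite set $\{\delta_x:x\in X\setminus\{0\}\}$, hence is finite-dimensional and so reflexive.

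For $(2)\Rightarrow(1)$ I would argue by contrapositive. Assume $X$ is infinite. Taking $E=\mathcal{F}(X)$ and $T=\delta_X$ one has $\widehat T=id_{\mathcal{F}(X)}$, so it suffices to produce a bounded sequence in $\mathcal{F}(X)$ with no weakly convergent subsequence. Passing to the completion if necessary (which does not change $\mathcal{F}(X)$), pick distinct $(x_n)_{n\ge 1}\subset X\setminus\{0\}$ and, after a further subsequence, fall into one of two cases: either (a) $(x_n)$ is uniformly discrete and bounded away from the basepoint, or (b) $(x_n)$ converges to some $y\in X$. Setting $y_n=0$ in case (a) and $y_n=y$ in case (b), form the unit vectors $m_n=\delta_{(x_n,y_n)}/d(x_n,y_n)\in\mathcal{F}(X)$. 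Using McShane extension, construct a Lipschitz function $g\in X^{\#}$ whose pairings $\langle m_n,g\rangle$ form a non-convergent (e.g.\ alternating) sequence, ruling out any weakly Cauchy subsequence of $(m_n)$.

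The main obstacle is case (b), where the separator $g$ must be finely calibrated so that the oscillating values $g(x_n)$ are of order $d(x_n,y)$ while $g$ remains Lipschitz globally. The device is to first thin the sequence to a geometric regime $d(x_{n+1},y)\le\tfrac12\,d(x_n,y)$ and then prescribe $g(x_n)=(-1)^n d(x_n,y)$ together with $g(y)=g(0)=0$ on the discrete set $\{0,y,x_n\}_n$; a routine ratio estimate using the geometric decay bounds the Lipschitz constant on this set by an absolute constant, so McShane delivers $g\in X^{\#}$ with $\langle m_n,g\rangle=(-1)^n$, exhibiting the required non-weakly-precompact sequence and proving non-reflexivity of $\mathcal{F}(X)$.
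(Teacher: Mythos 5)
Your reduction of the theorem to the reflexivity of $\mathcal{F}(X)$ is exactly the paper's: condition $(2)$ is transported through linearization to the statement that every bounded operator on $\mathcal{F}(X)$ is weakly compact, which by the Davis--Figiel--Johnson--Pe{\l}czy\'nski characterization means $\mathcal{F}(X)$ is reflexive, and the finite case is immediate since $\mathcal{F}(X)$ is then finite-dimensional. Where you diverge is the last step: the paper simply invokes \cite[Theorem 1]{7}, which says $\mathcal{F}(X)$ is never reflexive when $X$ is infinite, whereas you attempt to prove that non-reflexivity by hand.

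That hand-made argument has a genuine gap. To get non-reflexivity via Eberlein--\v{S}mulian you must produce a bounded sequence \emph{no subsequence of which} converges weakly. A single $g\in X^{\#}$ with $\langle m_n,g\rangle=(-1)^n$ only shows that the full sequence $(m_n)$ is not weakly Cauchy; the even-indexed subsequence pairs constantly with this $g$, so your $g$ does not ``rule out any weakly Cauchy subsequence'' as claimed. You would need to run the construction against an arbitrary subsequence (thinning it and building a $g$ adapted to that subsequence, then noting that a weakly convergent sequence cannot have a non--weakly-Cauchy further subsequence), or, more cleanly, show that $(m_n)$ is equivalent to the unit vector basis of $\ell_1$. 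A second, smaller defect is case (a): you give no construction there, and the natural prescription $g(x_n)=(-1)^n d(x_n,0)$ need not be Lipschitz, since two points with $d(x_n,0)=d(x_m,0)=R$ large and $d(x_n,x_m)=\epsilon$ force a difference quotient of $2R/\epsilon$; a further sub-case split (bounded versus unbounded $d(x_n,0)$, with its own geometric thinning) is required. Both defects are repairable, but as written the proof of $(2)\Rightarrow(1)$ is incomplete; citing \cite{7} as the paper does closes it in one line.
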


\begin{proof}
$\left( 1\right) \Rightarrow\left( 2\right) :$ Immediate.

$\left( 2\right) \Rightarrow \left( 1\right) :$ Let $E$ be a Banach space
and $v:\mathcal{F}\left( X\right) \rightarrow E$ be a linear operator. We
will show that $v$ is weakly compact. By $\left( 2\right) ,$ the Lipschitz
operator $T=v\circ \delta _{X}:X\rightarrow E$ is Lipschitz weakly compact.
Hence, $\widehat{T}=v$ is weakly compact. As consequence, the space $%
\mathcal{F}\left( X\right) $ is reflexive. But by \cite[Theorem 1]{7} the
space $\mathcal{F}\left( X\right) $ is never reflexive if $X$ is an infinite
metric space.
\end{proof}

Now, we recall the definition of strongly $p$-nuclear operators introduced
in \cite{5}$.$

\begin{definition}
Let $1\leq p<\infty $. A Lipschitz operator $T\in Lip_{0}(X;E)$ is called a
strongly Lipschitz $p$-nuclear $\left( 1\leq p<\infty \right) $ if there
exist operators $A\in B\left( l_{p};E\right) ,$ $b\in Lip_{0}\left(
X;l_{\infty }\right) $ and a diagonal operator $M_{\lambda }\in B\left(
l_{\infty };l_{p}\right) $ induced by a sequence $\lambda \in l_{p}$ such
that the following diagram commutes%
\begin{equation*}
\begin{array}{ccc}
X & \overset{T}{\longrightarrow } & E \\ 
b\downarrow &  & \uparrow A \\ 
l_{\infty } & \overset{M_{\lambda }}{\longrightarrow } & l_{p}%
\end{array}%
\end{equation*}%
The triple $(A,b,\lambda )$ is called a strongly Lipschitz $p$-nuclear
factorization of $T$. We denote $\mathcal{N}_{p}^{SL}\left( X;E\right) $ the
Banach space of all strongly Lipschitz $p$-nuclear operators from $X$ into $%
E $ with the norm%
\begin{equation*}
\emph{sv}_{p}^{L}\left( T\right) =\inf \left\Vert A\right\Vert \left\Vert
M_{\lambda }\right\Vert Lip\left( b\right) ,
\end{equation*}%
where the infimum is taken over all the above factorizations. For Banach
spaces $F,G,$ we denote by $\mathcal{N}_{p}\left( F;G\right) $ the space of
all $p$-nuclear linear operators which admit a factorization as in the
Lipschitz case with the operator $b$ being linear.
\end{definition}

\begin{proposition}
\label{Proposition 2.10}\textit{Let }$1\leq p<\infty .$ \textit{The
Lipschitz operator }$T:X\rightarrow E$\textit{\ is strongly Lipschitz }$p$%
\textit{-nuclear if and only if its linearization }$\widehat{T}$\textit{\ is 
}$p$\textit{-nuclear. Consequently} 
\begin{equation*}
\mathcal{N}_{p}^{SL}\left( X;E\right) =\mathcal{N}_{p}\circ Lip_{0}\left(
X;E\right) .
\end{equation*}
\end{proposition}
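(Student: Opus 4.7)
The plan is to prove the biconditional directly from the two factorization diagrams, using the universal property of the free Banach space $\mathcal{F}(X)$ to pass back and forth between $b\in Lip_{0}(X;l_{\infty})$ and its linearization $\widehat{b}\in\mathcal{B}(\mathcal{F}(X);l_{\infty})$. Once the equivalence is in hand, the final equality $\mathcal{N}_{p}^{SL}(X;E)=\mathcal{N}_{p}\circ Lip_{0}(X;E)$ follows immediately from Theorem~\ref{Theorem 2.1}.

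Assume first that $T\in\mathcal{N}_{p}^{SL}(X;E)$, and fix a factorization $T=A\circ M_{\lambda}\circ b$ with $A\in\mathcal{B}(l_{p};E)$, $M_{\lambda}\in\mathcal{B}(l_{\infty};l_{p})$, and $b\in Lip_{0}(X;l_{\infty})$. Let $\widehat{b}:\mathcal{F}(X)\to l_{\infty}$ be the (unique) linearization, so that $b=\widehat{b}\circ\delta_{X}$ and $\|\widehat{b}\|=Lip(b)$. Composing with $\delta_{X}$ and then with the identity $\widehat{T}\circ\delta_{X}=T$, I would verify that $\widehat{T}=A\circ M_{\lambda}\circ\widehat{b}$ on the dense subspace $\mathrm{span}\,\delta_{X}(X)\subset\mathcal{F}(X)$ and extend by continuity. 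This exhibits $\widehat{T}$ as a $p$-nuclear operator with $\nu_{p}(\widehat{T})\le\|A\|\,\|M_{\lambda}\|\,\|\widehat{b}\|=\|A\|\,\|M_{\lambda}\|\,Lip(b)$; taking the infimum yields $\nu_{p}(\widehat{T})\le sv_{p}^{L}(T)$.

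Conversely, suppose $\widehat{T}\in\mathcal{N}_{p}(\mathcal{F}(X);E)$ with a factorization $\widehat{T}=A\circ M_{\lambda}\circ B$, where $B:\mathcal{F}(X)\to l_{\infty}$ is linear and $A,M_{\lambda}$ are as before. Setting $b:=B\circ\delta_{X}\in Lip_{0}(X;l_{\infty})$, one has $Lip(b)\le\|B\|$ (and in fact equality, since the linearization of $b$ coincides with $B$ on the generating set and hence everywhere by uniqueness). Then
\begin{equation*}
T=\widehat{T}\circ\delta_{X}=A\circ M_{\lambda}\circ B\circ\delta_{X}=A\circ M_{\lambda}\circ b,
\end{equation*}
so $T$ is strongly Lipschitz $p$-nuclear with $sv_{p}^{L}(T)\le\|A\|\,\|M_{\lambda}\|\,Lip(b)\le\|A\|\,\|M_{\lambda}\|\,\|B\|$, giving $sv_{p}^{L}(T)\le\nu_{p}(\widehat{T})$. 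Combined with the previous bound, this produces $sv_{p}^{L}(T)=\nu_{p}(\widehat{T})$, and invoking Theorem~\ref{Theorem 2.1} identifies $\mathcal{N}_{p}\circ Lip_{0}(X;E)$ with $\mathcal{N}_{p}(\mathcal{F}(X);E)$ isometrically, concluding the proof.

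I do not anticipate a serious obstacle: the argument is purely a transfer between the Lipschitz and linear factorization diagrams via $\delta_{X}$ and its universal property. The only point requiring a little care is the norm identification $\|\widehat{b}\|=Lip(b)$ and the verification that $\widehat{T}=A\circ M_{\lambda}\circ\widehat{b}$ on all of $\mathcal{F}(X)$, both of which follow from the density of $\mathrm{span}\,\delta_{X}(X)$ and continuity.
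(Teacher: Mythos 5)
Your proposal is correct and follows essentially the same route as the paper: linearize the Lipschitz leg $b$ of the factorization and use uniqueness of the linearization to get $\widehat{T}=A\circ M_{\lambda}\circ\widehat{b}$, and conversely compose the linear factorization with $\delta_{X}$. You merely add the explicit norm bookkeeping (yielding $sv_{p}^{L}(T)=\nu_{p}(\widehat{T})$) and spell out the converse that the paper dismisses as immediate.
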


\begin{proof}
Let $T$ be a strongly Lipschitz $p$-nuclear operator, we have 
\begin{equation*}
T=A\circ M_{\lambda}\circ b.
\end{equation*}
We use the Lipschitz factorization of $T$ and $b$ 
\begin{equation*}
\widehat{T}\circ\delta_{X}=A\circ M_{\lambda}\circ\widehat{b}\circ\delta_{X},
\end{equation*}
by the uniqueness of linearization we obtain%
\begin{equation*}
\widehat{T}=A\circ M_{\lambda}\circ\widehat{b},
\end{equation*}
hence, $\widehat{T}$ is $p$-nuclear. The converse is immediate.
\end{proof}

In analogy with the definition of strongly Lipschitz $p$-nuclear operator,
the authors in \cite{14} have introduced the definition of strongly
Lipschitz $p$-integral operator. In fact, the same definition has been
introduced in \cite{5}$.$ In the first definition, the authors have
considered a factorization in which the left operator is linear and the
right one is Lipschitz. In the second definition, the role of these
operators has been changed.

\begin{definition}
Let $1\leq p<\infty $. A Lipschitz operator $T\in Lip_{0}(X;E)$ is called
strongly Lipschitz $p$-integral if there exist a finite measure space $%
(\Omega ,\Sigma ,\mu )$, a bounded linear operator $A\in \mathcal{B}%
(L_{p}(\mu );E^{\ast \ast })$ and a Lipschitz operator $b\in
Lip_{0}(X;L_{\infty }(\mu ))$ such that the following diagram commutes%
\begin{equation*}
\begin{array}{ccc}
X & \overset{T}{\longrightarrow }E\overset{\mathcal{K}_{E}}{\longrightarrow }
& E^{\ast \ast } \\ 
b\downarrow &  & \uparrow A \\ 
L_{\infty }\left( \mu \right) & \overset{I_{\infty ,p}}{\longrightarrow } & 
L_{p}\left( \mu \right)%
\end{array}%
\end{equation*}%
where $I_{\infty ,p}:L_{\infty }(\mu )\rightarrow L_{p}(\mu )$ is the formal
inclusion operator. The triple $(A,b,\mu )$ is called a strongly Lipschitz $%
p $-integral factorization of $T$. We denote by $\mathfrak{I}_{p}^{SL}\left(
X;E\right) $ the Banach space of all strongly Lipschitz $p$-integral
operators from $X$ into $E$ with the norm%
\begin{equation*}
\emph{si}_{p}^{L}\left( T\right) =\inf Lip\left( b\right) \left\Vert
A\right\Vert .
\end{equation*}
\end{definition}

For Banach spaces $F,G$ we denote by $\mathfrak{I}_{p}\left( F;G\right) $
the space of all $p$-integral linear operators. Using the same argument in
the proof of Proposition \ref{Proposition 2.10}, we can prove the following.

\begin{proposition}
\label{Proposition 2.12}\textit{Let }$1\leq p<\infty .$ \textit{The
Lipschitz operator }$T:X\rightarrow E$\textit{\ is strongly Lipschitz }$p$%
\textit{-integral if and only if its linearization }$\widehat{T}$\textit{\
is }$p$\textit{-integral. Consequently} 
\begin{equation*}
\mathfrak{I}_{p}^{SL}\left( X;E\right) =\mathfrak{I}_{p}\circ Lip_{0}\left(
X;E\right)
\end{equation*}%
\textit{holds isometrically. If }$p=1$ \textit{we have}%
\begin{align*}
\mathfrak{I}_{1}^{SL}\left( X;E\right) & =\mathfrak{I}_{1}\circ
Lip_{0}\left( X;E\right) \\
& =\mathfrak{I}_{1}^{dual}\circ Lip_{0}\left( X;E\right) .
\end{align*}
\end{proposition}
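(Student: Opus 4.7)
The plan is to imitate the proof of Proposition \ref{Proposition 2.10}, with the only genuine new wrinkle being the presence of the canonical embedding $\mathcal{K}_{E}:E\rightarrow E^{\ast \ast }$ on the right-hand leg of the diagram. Accordingly, the defining factorization of $T$ should be read as the identity $\mathcal{K}_{E}\circ T=A\circ I_{\infty ,p}\circ b$, and this is what I will linearize.

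For the forward implication, I assume $T$ is strongly Lipschitz $p$-integral with factorization $(A,b,\mu )$, write $T=\widehat{T}\circ \delta _{X}$ and $b=\widehat{b}\circ \delta _{X}$, so
\begin{equation*}
\mathcal{K}_{E}\circ \widehat{T}\circ \delta _{X}=A\circ I_{\infty ,p}\circ \widehat{b}\circ \delta _{X}.
\end{equation*}
The uniqueness of linearization applied to the Lipschitz operator $\mathcal{K}_{E}\circ T$ gives $\mathcal{K}_{E}\circ \widehat{T}=A\circ I_{\infty ,p}\circ \widehat{b}$, which witnesses that $\widehat{T}\in \mathfrak{I}_{p}(\mathcal{F}(X);E)$ with $\|\widehat{T}\|_{\mathfrak{I}_{p}}\leq \|A\|\,\|\widehat{b}\|=\|A\|\,Lip(b)$. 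Passing to the infimum yields $\|\widehat{T}\|_{\mathfrak{I}_{p}}\leq si_{p}^{L}(T)$. Conversely, if $\widehat{T}$ is $p$-integral, factor $\mathcal{K}_{E}\circ \widehat{T}=A\circ I_{\infty ,p}\circ v$ with $v:\mathcal{F}(X)\rightarrow L_{\infty }(\mu )$ linear; then $b:=v\circ \delta _{X}\in Lip_{0}(X;L_{\infty }(\mu ))$ satisfies $\mathcal{K}_{E}\circ T=A\circ I_{\infty ,p}\circ b$ and $Lip(b)\leq \|v\|$, which shows that $T$ is strongly Lipschitz $p$-integral and $si_{p}^{L}(T)\leq \|\widehat{T}\|_{\mathfrak{I}_{p}}$. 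Combining both inequalities gives the isometric identification $\mathfrak{I}_{p}^{SL}(X;E)=\mathfrak{I}_{p}\circ Lip_{0}(X;E)$.

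For the case $p=1$, I invoke the classical fact that the ideal of $1$-integral linear operators is symmetric, i.e.\ $\mathfrak{I}_{1}=\mathfrak{I}_{1}^{dual}$ as a normed operator ideal (a consequence of the Persson--Pietsch theory). From this, the composition classes coincide: $\mathfrak{I}_{1}\circ Lip_{0}(X;E)=\mathfrak{I}_{1}^{dual}\circ Lip_{0}(X;E)$, and the chain of equalities in the statement follows immediately from the first part applied with $p=1$.

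The only point that requires a little care is the forward linearization step, because $T$ itself does not factor through $L_{p}(\mu )$ but only becomes $A\circ I_{\infty ,p}\circ b$ after composing with $\mathcal{K}_{E}$; I handle this by linearizing the operator $\mathcal{K}_{E}\circ T$, whose linearization is precisely $\mathcal{K}_{E}\circ \widehat{T}$. Apart from this, the proof is a direct transcription of the $p$-nuclear argument.
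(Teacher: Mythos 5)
Your proof is correct and is essentially the paper's own argument: the paper simply says to repeat the proof of Proposition \ref{Proposition 2.10}, and you carry out exactly that linearization-plus-uniqueness argument, with the right extra care in composing with $\mathcal{K}_{E}$ before linearizing. The isometry bookkeeping and the appeal to the symmetry of $\mathfrak{I}_{1}$ for the $p=1$ case are both sound.
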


As in the linear case, we give a factorization result for strongly Lipschitz 
$p$-nuclear operators. For the proof, we use the linearization operators and
the result \cite[Theorem 5.27]{9}$.%
\vspace{0.5cm}%
$

\begin{theorem}
\label{Theorem 2.13}\textit{Let }$1\leq p<\infty .$\textit{\ A Lipschitz
operator }$T:X\rightarrow E$\textit{\ is strongly Lipschitz }$p$\textit{%
-nuclear if and only if there exist a Banach space }$Z$\textit{, a compact
linear operator }$v:Z\rightarrow E$\textit{\ and a strongly Lipschitz }$p$%
\textit{-integral operator }$L:X\rightarrow Z$\textit{\ such that }%
\begin{equation*}
T=v\circ L.
\end{equation*}%
\textit{In this case}%
\begin{equation*}
\emph{sv}_{p}^{L}\left( T\right) =\inf \left\Vert v\right\Vert \emph{si}%
_{p}^{L}\left( L\right) .
\end{equation*}
\end{theorem}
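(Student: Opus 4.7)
The plan is to reduce the statement to its linear counterpart via the linearization correspondence, using Propositions \ref{Proposition 2.10} and \ref{Proposition 2.12} as the bridges.

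First, I would establish the forward direction. Assume $T \in \mathcal{N}_p^{SL}(X;E)$. By Proposition \ref{Proposition 2.10}, the linearization $\widehat{T} : \mathcal{F}(X) \to E$ is $p$-nuclear, and the result \cite[Theorem 5.27]{9} then provides a factorization $\widehat{T} = v \circ w$, where $v : Z \to E$ is a compact linear operator and $w : \mathcal{F}(X) \to Z$ is $p$-integral, with $\|\widehat{T}\|_{\mathcal{N}_p}$ equal to the infimum of $\|v\|\,\|w\|_{\mathfrak{I}_p}$ over all such factorizations. Setting $L := w \circ \delta_X : X \to Z$, we get a Lipschitz operator whose linearization is $w$ (by uniqueness of linearization applied to $L = w \circ \delta_X$). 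Proposition \ref{Proposition 2.12} then gives $L \in \mathfrak{I}_p^{SL}(X;Z)$ with $\mathrm{si}_p^L(L) = \|w\|_{\mathfrak{I}_p}$, and composing yields $T = \widehat{T}\circ \delta_X = v \circ w \circ \delta_X = v \circ L$.

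For the converse, assume $T = v \circ L$ with $v : Z \to E$ compact linear and $L : X \to Z$ strongly Lipschitz $p$-integral. Then $\widehat{T} = v \circ \widehat{L}$, and $\widehat{L}$ is $p$-integral by Proposition \ref{Proposition 2.12}. Since $\mathcal{N}_p$ contains every composition of a compact operator with a $p$-integral operator (again by \cite[Theorem 5.27]{9}), we conclude $\widehat{T} \in \mathcal{N}_p(\mathcal{F}(X);E)$, and Proposition \ref{Proposition 2.10} gives $T \in \mathcal{N}_p^{SL}(X;E)$.

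For the norm identity, I would check that the two infimums are taken over the same set of data. Any Lipschitz factorization $T = v \circ L$ with $L$ strongly Lipschitz $p$-integral produces a linear factorization $\widehat{T} = v \circ \widehat{L}$ with $\|v\|\,\mathrm{si}_p^L(L) = \|v\|\,\|\widehat{L}\|_{\mathfrak{I}_p}$; conversely, any linear factorization $\widehat{T} = v \circ w$ with $w$ $p$-integral produces the Lipschitz factorization $T = v \circ (w \circ \delta_X)$ with the same product of norms. Combining this bijection of factorizations with Proposition \ref{Proposition 2.10} (which gives $\mathrm{sv}_p^L(T) = \|\widehat{T}\|_{\mathcal{N}_p}$) and the linear infimum formula yields $\mathrm{sv}_p^L(T) = \inf \|v\|\,\mathrm{si}_p^L(L)$.

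No step is truly hard; the only delicate point is to make sure that passing through $\delta_X$ does not distort the $p$-integral norm, which is precisely what Proposition \ref{Proposition 2.12} guarantees (together with the fact that $\widehat{L} = w$ whenever $L = w \circ \delta_X$). Once this identification is in hand, the argument is just a transparent translation of the linear theorem.
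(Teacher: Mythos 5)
Your proof follows essentially the same route as the paper: linearize via Propositions \ref{Proposition 2.10} and \ref{Proposition 2.12}, invoke the linear factorization theorem from \cite{9} for $\widehat{T}$, and transfer the factorization back and forth through $\delta_X$. In fact your treatment is slightly more careful than the paper's, since you verify the norm identity explicitly via the correspondence of factorizations and you correctly identify $L=w\circ\delta_X$ as strongly Lipschitz $p$-\emph{integral} (the paper's proof contains a slip calling it $p$-nuclear and citing the wrong proposition).
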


\begin{proof}
Let $T:X\rightarrow E$ be a strongly Lipschitz $p$-nuclear operator, then $%
\widehat{T}:\mathcal{F}\left( X\right) \rightarrow E$ is $p$-nuclear.
Theorem 2.27 in \cite{9} asserts that there exist a Banach space $Z$, a
compact operator linear $v:Z\rightarrow E$ and a $p$-integral operator $w:%
\mathcal{F}\left( X\right) \rightarrow Z$ such that\ $\widehat{T}=v\circ w.$
Then 
\begin{equation*}
\widehat{T}\circ \delta _{X}=v\circ w\circ \delta _{X}\Rightarrow T=v\circ L
\end{equation*}%
where $L=w\circ \delta _{X}$ which is strongly Lipschitz $p$-nuclear by
Proposition \ref{Proposition 2.13}$.$

Conversely, suppose that 
\begin{equation*}
T=v\circ L
\end{equation*}%
where $v$ is compact operator and $L$ is strongly Lipschitz $p$-integral,
then%
\begin{equation*}
\begin{array}{ccc}
X & \overset{T}{\rightarrow } & E \\ 
\delta _{X}\downarrow &  & \uparrow v \\ 
\mathcal{F}\left( X\right) & \overset{\widehat{L}}{\rightarrow } & Z%
\end{array}%
\end{equation*}%
i.e., $\widehat{T}=v\circ \widehat{L},$ with $\widehat{L}$ is $p$-integral.
So, $\widehat{T}=v\circ \widehat{L}$ is $p$-nuclear and then $T$ is strongly
Lipschitz $p$-nuclear.
\end{proof}

\section{Main results}

\subsection{Results on Lipschitz tensor product}

Let $F,G$ be two Banach spaces. We denote by $F\otimes G$ its algebraic
tensor product. There are various tensor norms that may be defined on the
tensor product $F\otimes G$. If we consider a pointed metric space $X$ and a
Banach space $E$, there is an attempt to generalize the definition of tensor
product to the category of metric spaces. The authors in \cite{3}\ have
studied the space $X\boxtimes E$ which called \textit{Lipschitz tensor
product. }Some Lipschitz cross-norms have been defined on this space\textit{.%
} In this section we provide to give some relations between Lipschitz
cross-norms and tensor norms. In the sequel, we will use the terminology of
Lipschitz cross-norms for norms defined on $X\boxtimes E$ and tensor norms
for norms defined on $F\otimes G.$

\begin{theorem}
\label{Theorem 3.1}\textit{Every tensor norm }$\alpha $\textit{\ generates a
dualizable Lipschitz cross-norm }$\alpha ^{L}$\textit{\ such that for all
pointed metric space }$X$\textit{\ and Banach space }$E$\textit{\ we have}%
\begin{equation}
\alpha ^{L}(\sum_{i=1}^{n}\delta _{\left( x_{i},y_{i}\right) }\boxtimes
e_{i})=\alpha (\sum_{i=1}^{n}\delta _{\left( x_{i},y_{i}\right) }\otimes
e_{i}),  \label{3.1}
\end{equation}%
\textit{where} $\sum_{i=1}^{n}\delta _{\left( x_{i},y_{i}\right) }\boxtimes
e_{i}\in X\boxtimes E$. \textit{In this case, the linear map} $\Phi
:X\boxtimes _{\alpha ^{L}}E\rightarrow \mathcal{F}\left( X\right) \otimes
_{\alpha }E$ \textit{defined by}%
\begin{equation*}
\Phi (\sum_{i=1}^{n}\delta _{\left( x_{i},y_{i}\right) }\boxtimes
e_{i})=\sum_{i=1}^{n}\delta _{\left( x_{i},y_{i}\right) }\otimes e_{i},
\end{equation*}%
\textit{is well-defined and is an isometry.}
\end{theorem}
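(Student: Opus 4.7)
The overall strategy is to construct $\alpha^{L}$ by pulling back $\alpha$ along a natural identification of $X\boxtimes E$ with a subspace of $\mathcal{F}(X)\otimes E$. I would first consider the linear assignment
\[
\Phi:\sum_{i=1}^{n}\lambda_{i}\delta_{(x_{i},y_{i})}\boxtimes e_{i}\longmapsto \sum_{i=1}^{n}\lambda_{i}\delta_{(x_{i},y_{i})}\otimes e_{i},
\]
show that it descends to a well-defined injective linear map from $X\boxtimes E$ into $\mathcal{F}(X)\otimes E$, and then \emph{define} $\alpha^{L}(u):=\alpha(\Phi(u))$. With this setup, \eqref{3.1} and the assertion that $\Phi$ is an isometry become tautological, and only the Lipschitz cross-norm property and dualizability remain to be checked.

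The heart of the argument is the well-definedness and injectivity of $\Phi$, both of which reduce to the equivalence of two kernel conditions: (a) $\sum_{i}\delta_{(x_{i},y_{i})}\otimes e_{i}=0$ in $\mathcal{F}(X)\otimes E$ iff $\sum_{i}(\phi(x_{i})-\phi(y_{i}))e_{i}=0$ in $E$ for every $\phi\in \mathcal{F}(X)^{\ast}=X^{\#}$; (b) $\sum_{i}\delta_{(x_{i},y_{i})}\boxtimes e_{i}=0$ in $X\boxtimes E\subset Lip_{0}(X;E^{\ast})^{\ast}$ iff $\sum_{i}(f(x_{i})-f(y_{i}))(e_{i})=0$ for every $f\in Lip_{0}(X;E^{\ast})$. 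To deduce the tensor-product condition from the Lipschitz condition one tests (b) against the rank-one Lipschitz maps $f(x)=\phi(x)e^{\ast}$, obtaining $e^{\ast}\bigl(\sum_{i}(\phi(x_{i})-\phi(y_{i}))e_{i}\bigr)=0$ and then varying $e^{\ast}\in E^{\ast}$. For the reverse direction one invokes the universal property of the algebraic tensor product: the bilinear form $(m,e)\mapsto \widehat{f}(m)(e)$ extends to a linear functional on $\mathcal{F}(X)\otimes E$ which sends $\delta_{(x_{i},y_{i})}\otimes e_{i}$ to $(f(x_{i})-f(y_{i}))(e_{i})$, so vanishing of the tensor yields vanishing of the Lipschitz sum.

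With $\Phi$ injective and $\alpha^{L}:=\alpha\circ \Phi$ defined accordingly, the Lipschitz cross-norm identity is immediate: $\alpha^{L}(\delta_{(x,y)}\boxtimes e)=\alpha(\delta_{(x,y)}\otimes e)=\|\delta_{(x,y)}\|_{\mathcal{F}(X)}\|e\|=d(x,y)\|e\|$, using that a tensor norm acts as a cross-norm on elementary tensors and that $\|\delta_{(x,y)}\|_{\mathcal{F}(X)}=d(x,y)$. For dualizability, given $f\in X^{\#}$ and $e^{\ast}\in E^{\ast}$, the rank-one functional $Q_{X}(f)\otimes e^{\ast}\in \mathcal{F}(X)^{\ast}\otimes E^{\ast}$ acts on $\mathcal{F}(X)\otimes_{\alpha}E$ with norm at most $Lip(f)\|e^{\ast}\|$ by the dual cross-norm property of tensor norms, and evaluating it on $\Phi\bigl(\sum_{i}\delta_{(x_{i},y_{i})}\boxtimes e_{i}\bigr)$ yields exactly $\sum_{i}(f(x_{i})-f(y_{i}))e^{\ast}(e_{i})$, whence the required bound by $Lip(f)\|e^{\ast}\|\alpha^{L}\bigl(\sum_{i}\delta_{(x_{i},y_{i})}\boxtimes e_{i}\bigr)$.

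The main obstacle is the injectivity/well-definedness step, where one must carefully separate formal linear combinations from their equivalence classes in both $X\boxtimes E$ and the algebraic tensor product. Once this identification is in place, the remaining verifications are essentially routine unpacking of definitions combined with standard duality properties of tensor norms.
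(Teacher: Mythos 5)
Your proposal is correct and follows essentially the same route as the paper: both define $\alpha^{L}$ by transporting $\alpha$ through the canonical map $\Phi$, establish well-definedness by testing against functionals of the form $f\otimes e^{\ast}$ with $f\in X^{\#}=\mathcal{F}(X)^{\ast}$, and obtain the cross-norm and dualizability properties from $\Vert\delta_{(x,y)}\Vert_{\mathcal{F}(X)}=d(x,y)$ and the reasonable-crossnorm duality of $\alpha$. The only difference is organizational: you define $\alpha^{L}:=\alpha\circ\Phi$ after proving $\Phi$ injective, which makes explicit the injectivity step that the paper leaves implicit when asserting that formula \eqref{3.1} yields a norm rather than merely a seminorm.
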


\begin{proof}
Let $\alpha $ be a tensor norm. The properties of the norm $\alpha ^{L}$
have been inherited from that of $\alpha .$ Let $X$ be a pointed metric
space and $E$ a Banach space. Let $x,y\in X$ and $e\in E$. Then%
\begin{align*}
\alpha ^{L}\left( \delta _{\left( x,y\right) }\boxtimes e\right) & =\alpha
\left( \delta _{\left( x,y\right) }\otimes e\right) =\left\Vert \delta
_{\left( x,y\right) }\right\Vert \left\Vert e\right\Vert \\
& =d\left( x,y\right) \left\Vert e\right\Vert .
\end{align*}%
So, $\alpha ^{L}$ is Lipschitz cross-norm. Let $f\in X^{\#}\left( =\mathcal{F%
}\left( X\right) ^{\ast }\right) $ and $e^{\ast }\in E^{\ast }.$ We have%
\begin{align*}
\left\vert \dsum\limits_{i=1}^{n}\left( f\left( x_{i}\right) -f\left(
y_{i}\right) \right) \left\langle e^{\ast },e_{i}\right\rangle \right\vert &
=\left\vert f\otimes e^{\ast }(\dsum\limits_{i=1}^{n}\delta _{\left(
x_{i},y_{i}\right) }\otimes e_{i})\right\vert \\
& \leq Lip\left( f\right) \left\Vert e^{\ast }\right\Vert \alpha
(\dsum\limits_{i=1}^{n}\delta _{\left( x_{i},y_{i}\right) }\otimes e_{i}) \\
& \leq Lip\left( f\right) \left\Vert e^{\ast }\right\Vert \alpha
^{L}(\dsum\limits_{i=1}^{n}\delta _{\left( x_{i},y_{i}\right) }\boxtimes
e_{i}).
\end{align*}%
Then $\alpha ^{L}$ is dualizable. Now, it is easy to show that $\Phi $ is
linear. Let $u=\sum_{i=1}^{n}\delta _{\left( x_{i},y_{i}\right) }\boxtimes
e_{i}=0,$ we will show that $\Phi \left( u\right) =0.$ Indeed, let $f\in 
\mathcal{F}\left( X\right) ^{\ast }$ and $e^{\ast }\in E^{\ast }.$ Since $%
\mathcal{F}\left( X\right) ^{\ast }=X^{\#}$ we have by \cite[Proposition 1.6]%
{3}%
\begin{equation*}
\sum_{i=1}^{n}\left( f\left( x_{i}\right) -f\left( y_{i}\right) \right)
e^{\ast }\left( e_{i}\right) =0.
\end{equation*}%
Then%
\begin{equation*}
\sum_{i=1}^{n}f\left( \delta _{\left( x_{i},y_{i}\right) }\right) e^{\ast
}\left( e_{i}\right) =0.
\end{equation*}%
So, $\Phi \left( u\right) =0$ thus tell us that $\Phi $ is well defined. Let 
$u=\sum_{i=1}^{n}\delta _{\left( x_{i},y_{i}\right) }\boxtimes e_{i}\in
X\boxtimes _{\alpha ^{L}}E.$ By \eqref{3.1} we have%
\begin{align*}
\alpha (\Phi (u))& =\alpha (\sum_{i=1}^{n}\delta _{\left( x_{i},y_{i}\right)
}\otimes e_{i}) \\
& =\alpha ^{L}\left( u\right) .
\end{align*}%
then $\Phi $ is an isometry.
\end{proof}

Since $\Phi$ is a linear isometry, its range $\Phi\left( X\boxtimes
_{\alpha^{L}}E\right) $ is closed. On the other hand, the tensors of the
form $\sum_{i=1}^{n}\delta_{\left( x_{i},y_{i}\right) }\otimes e_{i}$ are
dense in $\mathcal{F}\left( X\right) \widehat{\otimes}_{\alpha}E.$ This
shows that the range $\Phi\left( X\boxtimes_{\alpha^{L}}E\right) $ is dense
in $\mathcal{F}\left( X\right) \widehat{\otimes}_{\alpha}E$ and thus $X%
\widehat{\boxtimes}_{\alpha^{L}}E$ is isometrically isomorphic to $\mathcal{F%
}\left( X\right) \widehat{\otimes}_{\alpha}E$.%
\vspace{0.5cm}%

\begin{corollary}
\label{Corollary 3.2}\textit{For every pointed metric space }$X$ \textit{and
Banach space} $E$ \textit{we have,}%
\begin{equation}
X\widehat{\boxtimes }_{\alpha ^{L}}E=\mathcal{F}\left( X\right) \widehat{%
\otimes }_{\alpha }E  \label{3.2}
\end{equation}%
\textit{holds isometrically.}
\end{corollary}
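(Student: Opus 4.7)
The plan is to promote the isometric embedding
$$\Phi: X\boxtimes_{\alpha^{L}}E \longrightarrow \mathcal{F}(X)\otimes_{\alpha}E$$
supplied by Theorem \ref{Theorem 3.1} to an isometric isomorphism between the respective completions. Since $\Phi$ is a linear isometry between two normed spaces, it extends uniquely to a continuous linear isometry $\widehat{\Phi}: X\widehat{\boxtimes}_{\alpha^{L}}E \to \mathcal{F}(X)\widehat{\otimes}_{\alpha}E$ by the standard completion functor (the extension is automatic and automatically preserves the norm, so injectivity is free). The whole question therefore reduces to surjectivity of $\widehat{\Phi}$, and since $\widehat{\Phi}$ has closed image as soon as its image is complete, it suffices to show that the image of $\Phi$ is dense in the Banach space $\mathcal{F}(X)\widehat{\otimes}_{\alpha}E$.

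For the density step I would observe that the image of $\Phi$ consists precisely of the finite sums of the form $\sum_{i=1}^{n}\delta_{(x_{i},y_{i})}\otimes e_{i}$. Elementary tensors $m\otimes e$ with $m\in \mathcal{F}(X)$ and $e\in E$ span a dense subspace of $\mathcal{F}(X)\widehat{\otimes}_{\alpha}E$, so it is enough to approximate each such $m\otimes e$. By the definition of $\mathcal{F}(X)$ recalled in the preliminaries, the subspace $AE$ of finite combinations $\sum_{i=1}^{n}\lambda_{i}\delta_{(x_{i},y_{i})}$ is dense in $\mathcal{F}(X)$. For any $m=\sum_{i=1}^{n}\lambda_{i}\delta_{(x_{i},y_{i})}\in AE$ I would absorb the scalars into the second slot and write
$$m\otimes e=\sum_{i=1}^{n}\delta_{(x_{i},y_{i})}\otimes(\lambda_{i}e),$$
which lies in the image of $\Phi$. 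Choosing a sequence $m_{k}\to m$ in $\mathcal{F}(X)$ and invoking the reasonable-norm inequality $\alpha((m_{k}-m)\otimes e)\leq \|m_{k}-m\|\cdot\|e\|$ yields $m_{k}\otimes e\to m\otimes e$ in the $\alpha$-completion, which is the required density.

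The main point that needs care is the passage from algebraic density (which is classical: $\mathcal{F}(X)\otimes E$ is dense in $\mathcal{F}(X)\widehat{\otimes}_{\alpha}E$) to density in the $\alpha$-norm for the \emph{restricted} family of tensors whose first factor is a $\delta_{(x,y)}$. Once the two-step approximation above is written out carefully, a linear isometry between normed spaces whose image is dense in the completion of the target extends to an isometric isomorphism of the completions, and this delivers \eqref{3.2}.
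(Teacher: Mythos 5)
Your proposal is correct and follows essentially the same route as the paper: the paper also passes through the isometry $\Phi$ of Theorem \ref{Theorem 3.1} and deduces the identification from density of the tensors $\sum_{i=1}^{n}\delta_{(x_{i},y_{i})}\otimes e_{i}$ in $\mathcal{F}(X)\widehat{\otimes}_{\alpha}E$. The only difference is that the paper simply asserts this density, whereas you justify it via the density of $AE$ in $\mathcal{F}(X)$ together with the crossnorm inequality $\alpha\left((m_{k}-m)\otimes e\right)\leq \left\Vert m_{k}-m\right\Vert \left\Vert e\right\Vert$, which is a worthwhile (and correct) completion of the argument.
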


As a consequence of Theorem \ref{Theorem 2.1} and Corollary \ref{Corollary
3.2}, we get the following result.

\begin{corollary}
\label{Corollary 3.3}\textit{Let} $\mathcal{I}\circ L$ \textit{be a
Lipschitz ideal generated by the operator ideal} $\mathcal{I}$. \textit{%
Suppose that }$\mathcal{I}$ \textit{can be interpreted through a tensor
product, i.e., there is a tensor norm }$\alpha $\textit{\ such that for
every Banach spaces }$F,G$\textit{\ we have} 
\begin{equation*}
\mathcal{I}\left( F;G^{\ast }\right) =(F\widehat{\otimes }_{\alpha }G)^{\ast
}.
\end{equation*}%
\textit{Then, there is a Lipschitz cross-norm }$\alpha ^{L}$\textit{\
defined as in }\eqref{3.1}\textit{\ such that}%
\begin{equation*}
\mathcal{I}\circ Lip_{0}\left( X;E^{\ast }\right) =(X\widehat{\boxtimes }%
_{\alpha ^{L}}E)^{\ast }.
\end{equation*}%
If we consider the projective tensor norm $\pi $ and injective tensor norm $%
\varepsilon $, by using \eqref{3.2} and the last corollary, it is not hard
to see that%
\begin{equation*}
Lip_{0}\left( X;E^{\ast }\right) =(X\widehat{\boxtimes }_{\pi ^{L}}E)^{\ast
}=(\mathcal{F}\left( X\right) \widehat{\otimes }_{\pi }E)^{\ast },
\end{equation*}%
and 
\begin{equation*}
\mathfrak{I}_{1}\left( \mathcal{F}\left( X\right) ;E^{\ast }\right) =(X%
\widehat{\boxtimes }_{\varepsilon ^{L}}E)^{\ast }=(\mathcal{F}\left(
X\right) \widehat{\otimes }_{\varepsilon }E)^{\ast }=\mathfrak{I}%
_{1}^{SL}\left( X;E^{\ast }\right) .
\end{equation*}
\end{corollary}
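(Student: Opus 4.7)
The plan is to simply chain together the three identifications already available in the paper: Theorem \ref{Theorem 2.1}, the hypothesized tensor-product representation of $\mathcal{I}$, and Corollary \ref{Corollary 3.2}. Since each step is an isometric identity, no delicate construction is required; the proof is essentially a composition of three known equalities.

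First, I would apply Theorem \ref{Theorem 2.1} with target space $E^{\ast }$ in place of $E$, obtaining the isometric identification
\begin{equation*}
\mathcal{I}\circ Lip_{0}\left( X;E^{\ast }\right) =\mathcal{I}\left(
\mathcal{F}\left( X\right) ;E^{\ast }\right) .
\end{equation*}
Next, I would invoke the hypothesis on $\mathcal{I}$ with $F=\mathcal{F}\left( X\right) $ and $G=E$, which gives
\begin{equation*}
\mathcal{I}\left( \mathcal{F}\left( X\right) ;E^{\ast }\right) =(\mathcal{F}
\left( X\right) \widehat{\otimes }_{\alpha }E)^{\ast }.
\end{equation*}
Finally, I would appeal to Corollary \ref{Corollary 3.2}, which furnishes the isometric isomorphism $X\widehat{\boxtimes }_{\alpha ^{L}}E=\mathcal{F}\left( X\right) \widehat{\otimes }_{\alpha }E$; passing to duals on both sides produces
\begin{equation*}
(\mathcal{F}\left( X\right) \widehat{\otimes }_{\alpha }E)^{\ast }=(X
\widehat{\boxtimes }_{\alpha ^{L}}E)^{\ast }.
\end{equation*}

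Composing these three isometric identifications yields the desired equality $\mathcal{I}\circ Lip_{0}\left( X;E^{\ast }\right) =(X\widehat{\boxtimes }_{\alpha ^{L}}E)^{\ast }$. There is no real obstacle here, since all the hard work has already been done in Theorem \ref{Theorem 2.1} (for the passage from a Lipschitz operator to its linearization) and in Theorem \ref{Theorem 3.1}/Corollary \ref{Corollary 3.2} (for the passage from the Lipschitz cross-norm $\alpha ^{L}$ to the linear tensor norm $\alpha $ via the isometry $\Phi $). The only point worth being explicit about is that each of the three identifications is isometric, so the chain delivers a genuine isometric equality rather than a mere algebraic isomorphism; this is guaranteed because Theorem \ref{Theorem 2.1} records $\Vert T\Vert _{\mathcal{I}\circ Lip_{0}}=\Vert \widehat{T}\Vert _{\mathcal{I}}$ and Corollary \ref{Corollary 3.2} transports the norms exactly.
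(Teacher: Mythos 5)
Your proposal is correct and is exactly the argument the paper intends: the corollary is stated there as an immediate consequence of Theorem \ref{Theorem 2.1} and Corollary \ref{Corollary 3.2}, combined with the hypothesized tensor representation of $\mathcal{I}$, and your chain of three isometric identifications is precisely that. The displayed examples for $\pi$ and $\varepsilon$ then follow by specializing $\mathcal{I}$ to the bounded and $1$-integral operators, respectively, which you could mention for completeness, but the core reasoning matches the paper.
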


\subsection{Lipschitz Chevet-Saphar norms}

We will consider the Chevet-Saphar tensor norms and we will discuss their
corresponding Lipschitz cross-norms. Let $F,G$ be two Banach spaces. The
Chevet-Saphar norms $g_{p}$\ and $d_{p}$ are defined on a tensor product $%
F\otimes G$ for $1\leq p\leq \infty $ as 
\begin{equation*}
d_{p}\left( u\right) =\inf \left\{ \left\Vert \left( x_{i}\right)
_{i}\right\Vert _{l_{p}^{n,w}\left( F\right) }\left\Vert \left( g_{i}\right)
_{i}\right\Vert _{l_{p^{\ast }}^{n}\left( G\right)
}:u=\sum_{i=1}^{n}x_{i}\otimes g_{i}\right\} 
\end{equation*}%
and 
\begin{equation*}
g_{p}\left( u\right) =\inf \left\{ \left\Vert \left( x_{i}\right)
_{i}\right\Vert _{l_{p^{\ast }}^{n}\left( F\right) }\left\Vert \left(
g_{i}\right) _{i}\right\Vert _{l_{p}^{n,w}\left( G\right)
}:u=\sum_{i=1}^{n}x_{i}\otimes g_{i}\right\} ,
\end{equation*}%
These norms are mainly introduced to study the classes of $p$-summing and
strongly $p$-summing linear operators. The dual spaces of the corresponding
tensor products coincide with these last spaces, i.e.,%
\begin{equation*}
\mathcal{D}_{p}\left( F;G^{\ast }\right) =(F\widehat{\otimes }%
_{g_{p}}G)^{\ast }\text{ and }\Pi _{p}\left( F;G^{\ast }\right) =(F\widehat{%
\otimes }_{d_{p^{\ast }}}G)^{\ast }.%
\vspace{0.5cm}%
\end{equation*}%
We recall that a linear operator $T:F\rightarrow G$ is $p$-summing if there
exists a positive constant $C$ such that for every $x_{1},...,x_{n}\in F$
and $g_{1}^{\ast },...,g_{n}^{\ast }\in G^{\ast }$ we have%
\begin{equation}
\left\vert \sum_{i=1}^{n}\left\langle T\left( x_{i}\right) ,g_{i}^{\ast
}\right\rangle \right\vert \leq Cd_{p}\left( u\right) ,  \label{3.3}
\end{equation}%
where $u=\sum_{i=1}^{n}x_{i}\otimes g_{i}^{\ast }.$ The space $\Pi
_{p}\left( F,G\right) $ stands the Banach space of all $p$-summing linear
operators and%
\begin{equation*}
\left\Vert T\right\Vert _{\Pi _{p}}=\inf \{C\text{, verifying the equality %
\eqref{3.3}}\}\text{\textit{.}}
\end{equation*}%
Moreover, for the definition of strongly $p$-summing linear operators, we
substitute in \eqref{3.3} $d_{p}\left( u\right) $ by $g_{p}\left( u\right) $%
. Again, $\mathcal{D}_{p}\left( F;G\right) $ stands the Banach space of all
strongly $p$-summing linear operators with the norm $\left\Vert T\right\Vert
_{D_{p}}$\textit{.} For more details about these notions see \cite{6,9}$.$
Now, let $X$ be a pointed metric space and $E$ be a Banach space. We define $%
d_{p}^{L},$ the corresponding norm of $d_{p},$ as follows: for every\textit{%
\ }$u=X\boxtimes E$\textit{\ }we have%
\begin{equation*}
d_{p}^{L}\left( u\right) =d_{p}\left( \Phi \left( u\right) \right) =\inf
\left\{ \left\Vert m_{i}\right\Vert _{l_{p}^{n,w}\left( \mathcal{F}\left(
X\right) \right) }\left\Vert \left( e_{i}\right) _{i}\right\Vert
_{l_{p^{\ast }}^{n}\left( E\right) }\right\} ,
\end{equation*}%
where the infimum is taken over all representations of the form $%
\sum_{i=1}^{n}m_{i}\otimes e_{i}\in \mathcal{F}\left( X\right) \otimes E$
such that $\Phi \left( u\right) =\sum_{i=1}^{n}m_{i}\otimes e_{i}.$ By %
\eqref{3.2}, we obtain the following identification for $1\leq p<\infty $%
\begin{equation*}
X\widehat{\boxtimes }_{d_{p}^{L}}E=\mathcal{F}\left( X\right) \widehat{%
\otimes }_{d_{p}}E.
\end{equation*}%
Let $T\in Lip_{0}\left( X;E\right) $ be a Lipschitz operator, the operator $T
$ can be see as a linear functional on $X\boxtimes E^{\ast }$ which its
action on a tensor $u=\sum_{i=1}^{n}\delta _{\left( x_{i},y_{i}\right)
}\boxtimes e_{i}^{\ast }$ is given by%
\begin{equation*}
\left\langle T,u\right\rangle =\sum_{i=1}^{n}\left\langle T\left(
x_{i}\right) -T\left( y_{i}\right) ,e_{i}^{\ast }\right\rangle .%
\vspace{0.5cm}%
\end{equation*}%
Inspired by the definition of $p$-summing linear operators \eqref{3.3}$,$ we
introduce a new definition in the category of Lipschitz mappings.$%
\vspace{0.5cm}%
$

\begin{definition}
Let $1\leq p<\infty .$ Let $X$ be a pointed metric space and $E$ a Banach
space. A Lipschitz operator $T:X\rightarrow E$ is said to be\textit{\
strictly Lipschitz }$p$\textit{-summing} if there exists a positive constant 
$C$ such that for every $x_{i},y_{i}\in X$ and $e_{i}^{\ast }\in E^{\ast }$ $%
\left( 1\leq i\leq n\right) $ we have 
\begin{equation}
\left\vert \sum_{i=1}^{n}\left\langle T\left( x_{i}\right) -T\left(
y_{i}\right) ,e_{i}^{\ast }\right\rangle \right\vert \leq Cd_{p}^{L}\left(
u\right)  \label{3.4}
\end{equation}%
where $u=\sum_{i=1}^{n}\delta _{\left( x_{i},y_{i}\right) }\boxtimes
e_{i}^{\ast }.$ We denote by $\Pi _{p}^{SL}\left( X;E\right) $ the Banach
space of all strictly Lipschitz $p$-summing operators from $X$ into $E$
which its norm $\left\Vert T\right\Vert _{\Pi _{p}^{SL}}$ is the smallest
constant $C$ verifying \eqref{3.4}$.$
\end{definition}

In \cite{4}$,$ the author has studied the class $\Pi _{p}^{L}\left(
X;E\right) $ of Lipschitz $p$-summing operators. He has defined a norm on
the space of molecules $\mathcal{F}\left( X;E\right) $ of which we have the
next duality%
\begin{equation*}
\Pi _{p}^{L}\left( X;E^{\ast }\right) =\mathcal{F}_{cs_{p}}\left( X;E\right)
^{\ast },
\end{equation*}%
where $cs_{p}$ is defined by%
\begin{equation*}
cs_{p}\left( u\right) =\inf \left\{ \left\Vert \delta _{\left(
x_{i},y_{i}\right) }\right\Vert _{l_{p\ast }^{n,w}\left( \mathcal{F}\left(
X\right) \right) }\left\Vert \left( e_{i}\right) _{i}\right\Vert
_{l_{p}^{n}\left( E\right) }\right\} ,
\end{equation*}%
where the infimum is taken over all representations of $u$ of the form $%
u=\sum_{i=1}^{n}\delta _{\left( x_{i},y_{i}\right) }\boxtimes e_{i}\in 
\mathcal{F}\left( X;E\right) .$ Note that the space of molecules $\mathcal{F}%
\left( X;E\right) $\ plays the same role of Lipschitz tensor product $%
X\boxtimes E$ which their norms can be defined on both spaces. Definitions $%
cs_{p}$ and $d_{p}^{L}$ look very similar. However, they do not coincide. In
the definition of $cs_{p}$ we are only using elements in $\mathcal{F}\left(
X\right) $ of the form $\delta _{\left( x,y\right) },$ but in the second
case we have to consider all elements of $\mathcal{F}\left( X\right) .$
Therefore, the infimum in $d_{p}^{L}$ will in general be smaller. It means
that 
\begin{equation*}
\Pi _{p}^{SL}\left( X;E\right) \subset \Pi _{p}^{L}\left( X;E\right) .%
\vspace{0.5cm}%
\end{equation*}

In \cite{15}$,$ we have seen that if the linearization $\widehat{T}$ of $T$\
is $p$-summing then $T$ is Lipschitz $p$-summing, but the converse is not
true in general. In our case, we show that it is true for the concept of
strictly Lipschitz $p$-summing.$%
\vspace{0.5cm}%
$

\begin{theorem}
\label{Theorem 3.5}\textit{Let }$1\leq p<\infty .$ \textit{Let} $X$\textit{\
be a metric space and }$E$\textit{\ be a Banach space. The following
properties are equivalent.}

$\left( 1\right) $ \textit{The Lipschitz operator }$T$\textit{\ belongs to} $%
\Pi_{p}^{SL}\left( X;E\right) .$

$\left( 2\right) $ \textit{The linearization }$\widehat{T}$\textit{\ belongs
to} $\Pi_{p}\left( \mathcal{F}\left( X\right) ;E\right) .$
\end{theorem}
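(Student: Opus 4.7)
The proof naturally splits into two implications, one nearly immediate and the other requiring a density argument. I would begin by isolating the following key identity: for any $x,y \in X$, the basic molecule $\delta_{(x,y)}$ lies in $\mathcal{F}(X)$ and satisfies $\widehat{T}(\delta_{(x,y)}) = T(x) - T(y)$. So the two inequalities being compared—one for $T$ tested against elementary Lipschitz tensors $\sum \delta_{(x_i,y_i)} \boxtimes e_i^*$, one for $\widehat{T}$ tested against general algebraic tensors $\sum m_i \otimes e_i^*$—agree whenever $m_i = \delta_{(x_i,y_i)}$, by the very definition $d_p^L(u) = d_p(\Phi(u))$ recorded just before the theorem.

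For $(2) \Rightarrow (1)$, the approach is direct. Given sequences $(x_i,y_i) \subset X \times X$ and $(e_i^*) \subset E^*$, set $m_i = \delta_{(x_i,y_i)}$ and apply the $p$-summing bound of $\widehat{T}$ to the linear tensor $\sum m_i \otimes e_i^* = \Phi(u)$, where $u = \sum \delta_{(x_i,y_i)} \boxtimes e_i^*$. Using the identity above and the relation $d_p(\Phi(u)) = d_p^L(u)$, the $p$-summing inequality transforms into exactly the strictly Lipschitz $p$-summing inequality with constant $\|\widehat{T}\|_{\Pi_p}$.

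For $(1) \Rightarrow (2)$, my plan is to recover the full $p$-summing inequality for $\widehat{T}$ from its special case on $AE \otimes E^*$ (where $AE \subset \mathcal{F}(X)$ is the dense subspace spanned by the $\delta_{(x,y)}$'s), since on this subspace the hypothesis applies directly through $\Phi$. Given an arbitrary algebraic tensor $v = \sum_{i=1}^n m_i \otimes e_i^* \in \mathcal{F}(X) \otimes E^*$, I approximate each $m_i$ by some $\tilde m_i = \sum_k \lambda_{ik}\, \delta_{(a_{ik},b_{ik})} \in AE$ with $\|m_i - \tilde m_i\| < \varepsilon$; then $\sum \tilde m_i \otimes e_i^* = \Phi\bigl(\sum_{i,k} \delta_{(a_{ik},b_{ik})} \boxtimes \lambda_{ik} e_i^*\bigr)$, and the strictly Lipschitz $p$-summing hypothesis yields
\[
\Bigl|\sum_i \langle \widehat{T}(\tilde m_i), e_i^* \rangle\Bigr| \leq \|T\|_{\Pi_p^{SL}} \cdot d_p\Bigl(\sum_i \tilde m_i \otimes e_i^*\Bigr).
\]
Letting $\varepsilon \to 0$ gives the required bound on $v$, whence $\widehat{T} \in \Pi_p(\mathcal{F}(X); E)$.

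The main technical point is justifying the passage to the limit. On the left, convergence of $\sum \langle \widehat{T}(\tilde m_i), e_i^* \rangle \to \sum \langle \widehat{T}(m_i), e_i^* \rangle$ follows from boundedness of $\widehat{T}$ and the estimate $\sum \|\widehat{T}\|\, \varepsilon\, \|e_i^*\|$. On the right, continuity of $d_p$ under small perturbations of the $m_i$'s follows from subadditivity together with the cross-norm identity $d_p(m \otimes e^*) = \|m\|\|e^*\|$, giving $|d_p(\sum \tilde m_i \otimes e_i^*) - d_p(\sum m_i \otimes e_i^*)| \leq \varepsilon \sum \|e_i^*\|$. This is the step most likely to require careful bookkeeping. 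Combining the two implications, I expect also to extract the isometric equality $\|T\|_{\Pi_p^{SL}} = \|\widehat{T}\|_{\Pi_p}$, beyond what the theorem states.
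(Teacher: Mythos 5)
Your proposal is correct and follows essentially the same route as the paper: the implication $(2)\Rightarrow(1)$ via the identity $\widehat{T}(\delta_{(x,y)})=T(x)-T(y)$ together with $d_p^L(u)=d_p(\Phi(u))$, and $(1)\Rightarrow(2)$ by testing $\widehat{T}$ on tensors whose first components lie in the dense subspace spanned by the $\delta_{(x,y)}$'s, absorbing the scalars $\lambda_{ik}$ into the functionals $e_i^*$. The only difference is that the paper stops after verifying the inequality on that dense subspace, whereas you spell out the approximation and passage to the limit (using boundedness of $\widehat{T}$ and the cross-norm estimate for $d_p$), which is a welcome but minor addition of rigor rather than a different method.
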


\begin{proof}
$\left( 2\right) \Rightarrow \left( 1\right) :$ Suppose that $\widehat{T}\in
\Pi _{p}\left( \mathcal{F}\left( X\right) ;E\right) .$ Let $x_{i},y_{i}\in X$
and $e_{i}^{\ast }\in E^{\ast }$ $\left( 1\leq i\leq n\right) .$ We put $%
u=\sum_{i=1}^{n}\delta _{\left( x_{i},y_{i}\right) }\boxtimes e_{i}^{\ast }$%
, then 
\begin{align*}
\left\vert \sum_{i=1}^{n}\left\langle T\left( x_{i}\right) -T\left(
x_{i}\right) ,e_{i}^{\ast }\right\rangle \right\vert & =\left\vert
\sum_{i=1}^{n}\left\langle \widehat{T}\left( \delta _{\left(
x_{i},y_{i}\right) }\right) ,e_{i}^{\ast }\right\rangle \right\vert \\
& \leq \left\Vert \widehat{T}\right\Vert _{\Pi _{p}}d_{p}\left( \Phi \left(
u\right) \right) =\left\Vert \widehat{T}\right\Vert _{\Pi
_{p}}d_{p}^{L}\left( u\right) ,
\end{align*}%
hence $T$ is strictly Lipschitz $p$-summing and 
\begin{equation*}
\left\Vert T\right\Vert _{\Pi _{p}^{SL}}\leq \left\Vert \widehat{T}%
\right\Vert _{\Pi _{p}}.
\end{equation*}%
$\left( 1\right) \Rightarrow \left( 2\right) :$ Suppose that $T\in \Pi
_{p}^{SL}\left( X;E\right) .$ Let $m_{i}\in \mathcal{F}\left( X\right) $ $%
(m_{i}=\sum_{j=1}^{k_{i}}\lambda _{i}^{j}\delta _{\left(
x_{i}^{j},y_{i}^{j}\right) }:\lambda _{i}^{j}\in \mathbb{R})$ and $%
e_{i}^{\ast }\in E\left( 1\leq i\leq n\right) $%
\begin{align*}
\left\vert \sum_{i=1}^{n}\left\langle \widehat{T}\left( m_{i}\right)
,e_{i}^{\ast }\right\rangle \right\vert & =\left\vert
\sum_{i=1}^{n}\sum_{j=1}^{k_{i}}\left\langle T\left( x_{i}^{j}\right)
-T\left( y_{i}^{j}\right) ,\lambda _{i}^{j}e_{i}^{\ast }\right\rangle
\right\vert \\
& \leq \left\Vert T\right\Vert _{\Pi _{p}^{SL}}d_{p}^{L}\left( u\right)
=\left\Vert T\right\Vert _{\Pi _{p}^{SL}}d_{p}\left( \Phi \left( u\right)
\right) ,
\end{align*}%
where 
\begin{equation*}
u=\sum_{i=1}^{n}\sum_{j=1}^{k_{i}}\lambda _{i}^{j}\delta _{\left(
x_{i}^{j},y_{i}^{j}\right) }\boxtimes e_{i}^{\ast },
\end{equation*}%
then $\Phi \left( u\right) =\sum_{i=1}^{n}m_{i}\otimes e_{i}^{\ast }.$ So, $%
\widehat{T}$ is $p$-summing and $\left\Vert \widehat{T}\right\Vert _{\Pi
_{p}}\leq \left\Vert T\right\Vert _{\Pi _{p}^{SL}}.$
\end{proof}

As immediate consequences, we have the following results.

\begin{corollary}
\label{Corollary 3.6}\textit{For every pointed metric space }$X$\textit{\
and Banach space }$E$\textit{\ we have }%
\begin{equation*}
\Pi _{p}^{SL}\left( X;E\right) =(X\widehat{\boxtimes }_{d_{p}^{L}}E^{\ast
})^{\ast }=(\mathcal{F}\left( X\right) \widehat{\otimes }_{d_{p}}E^{\ast
})^{\ast }=\Pi _{p}\left( \mathcal{F}\left( X\right) ;E\right) .
\end{equation*}
\end{corollary}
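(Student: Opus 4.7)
The plan is to obtain the four-term chain of equalities by assembling three ingredients that are already at hand: Theorem \ref{Theorem 3.5}, Corollary \ref{Corollary 3.2}, and the classical Chevet--Saphar duality $\Pi_{p}(F;G^{\ast})=(F\widehat{\otimes}_{d_{p}}G)^{\ast}$ recalled at the beginning of Section 3.2.

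First, I would deduce the rightmost equality from Theorem \ref{Theorem 3.5}: the correspondence $T\leftrightarrow \widehat{T}$ is a linear bijection $Lip_{0}(X;E)\to \mathcal{B}(\mathcal{F}(X);E)$, and Theorem \ref{Theorem 3.5} yields both that it restricts to a bijection $\Pi_{p}^{SL}(X;E)\to \Pi_{p}(\mathcal{F}(X);E)$ and the norm identity $\|T\|_{\Pi_{p}^{SL}}=\|\widehat{T}\|_{\Pi_{p}}$, so the identification is isometric.

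Next, for the middle equality I would invoke Corollary \ref{Corollary 3.2} with the tensor norm $\alpha =d_{p}$. By the very definition of $d_{p}^{L}$ introduced just above the statement (namely $d_{p}^{L}(u)=d_{p}(\Phi (u))$), this is precisely the Lipschitz cross-norm generated by $d_{p}$ in the sense of Theorem \ref{Theorem 3.1}. Corollary \ref{Corollary 3.2} then provides the isometric isomorphism $X\widehat{\boxtimes}_{d_{p}^{L}}E^{\ast}=\mathcal{F}(X)\widehat{\otimes}_{d_{p}}E^{\ast}$, and passing to duals yields the second equality. Composing with the Chevet--Saphar identification $(\mathcal{F}(X)\widehat{\otimes}_{d_{p}}E^{\ast})^{\ast}=\Pi_{p}(\mathcal{F}(X);E)$ closes the loop back to the rightmost term.

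Finally, the leftmost equality $\Pi_{p}^{SL}(X;E)=(X\widehat{\boxtimes}_{d_{p}^{L}}E^{\ast})^{\ast}$ is essentially a direct reading of the definition preceding the corollary: inequality \eqref{3.4} states exactly that the linear functional $\varphi_{T}$ on $X\boxtimes E^{\ast}$ is continuous with respect to $d_{p}^{L}$, with operator norm equal to $\|T\|_{\Pi_{p}^{SL}}$, so $\varphi_{T}$ extends uniquely to an element of $(X\widehat{\boxtimes}_{d_{p}^{L}}E^{\ast})^{\ast}$. The main obstacle, and the only nontrivial point, is the converse direction: one must verify that every bounded functional on $X\widehat{\boxtimes}_{d_{p}^{L}}E^{\ast}$ actually comes from a Lipschitz map into $E$. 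Rather than constructing such a $T$ by hand, I plan to transport the functional via the isometry of Corollary \ref{Corollary 3.2} into $\Pi_{p}(\mathcal{F}(X);E)$ and then pull back through $\delta_{X}$ to recover $T\in \Pi_{p}^{SL}(X;E)$. In this way the leftmost equality is forced by the middle and rightmost identifications already secured, and the full chain closes.
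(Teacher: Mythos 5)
Your proposal is correct and follows essentially the route the paper intends: the corollary is presented there as an immediate consequence of Theorem \ref{Theorem 3.5} (which gives the isometric identification $\Pi_{p}^{SL}(X;E)=\Pi_{p}(\mathcal{F}(X);E)$ via $T\leftrightarrow\widehat{T}$), of Corollary \ref{Corollary 3.2} applied with $\alpha=d_{p}$ (which gives $X\widehat{\boxtimes}_{d_{p}^{L}}E^{\ast}=\mathcal{F}(X)\widehat{\otimes}_{d_{p}}E^{\ast}$ and hence the middle equality on duals), and of the definition \eqref{3.4} together with the classical Chevet--Saphar duality for the outer identification, exactly as you assemble them. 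The one small wrinkle is that you quote the duality as $(F\widehat{\otimes}_{d_{p}}G)^{\ast}=\Pi_{p}(F;G^{\ast})$ while the paper records it with $d_{p^{\ast}}$ at the start of Section 3.2; your indexing is the one consistent with \eqref{3.3} and with the statement of the corollary itself, so this discrepancy is internal to the paper rather than a gap in your argument.
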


\begin{corollary}
\label{Corollary 3.7}\textit{The next inclusion is strict}%
\begin{equation*}
\Pi _{p}^{SL}\left( \mathbb{R};l_{1}\left( \mathbb{R}\right) \right) \subset
\Pi _{p}^{L}\left( \mathbb{R};l_{1}\left( \mathbb{R}\right) \right) .
\end{equation*}
\end{corollary}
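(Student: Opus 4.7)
The plan is to produce a Lipschitz mapping $T:\mathbb{R}\to l_{1}(\mathbb{R})$ that lies in $\Pi_{p}^{L}(\mathbb{R};l_{1}(\mathbb{R}))$ but whose linearization $\widehat{T}$ fails to be $p$-summing; by Theorem~\ref{Theorem 3.5} this forces $T\notin \Pi_{p}^{SL}(\mathbb{R};l_{1}(\mathbb{R}))$, yielding the strict inclusion.

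First, I would observe that $\Pi_{p}^{L}(\mathbb{R};E)=Lip_{0}(\mathbb{R};E)$ for every Banach space $E$: for any $T\in Lip_{0}(\mathbb{R};E)$ and any finite families $x_{i},y_{i}\in \mathbb{R}$, testing the Farmer--Johnson supremum with the identity function $f=id_{\mathbb{R}}\in \mathbb{R}^{\#}$ (for which $Lip(f)=1$) gives
\begin{equation*}
\sum \left\Vert T(x_{i})-T(y_{i})\right\Vert ^{p}\leq Lip(T)^{p}\sum |x_{i}-y_{i}|^{p}\leq Lip(T)^{p}\sup_{Lip(f)\leq 1}\sum |f(x_{i})-f(y_{i})|^{p},
\end{equation*}
so every Lipschitz $T$ from $\mathbb{R}$ is automatically Lipschitz $p$-summing.

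Next, I would invoke the classical isometric identification $\mathcal{F}(\mathbb{R})\cong L_{1}(\mathbb{R})$, under which $\delta_{(x,y)}$ corresponds to the signed indicator $\chi_{[y,x]}-\chi_{[x,y]}$ of the interval between $x$ and $y$ (a direct consequence of the fundamental theorem of calculus applied to $f\in \mathbb{R}^{\#}$, whose derivative lies in $L_{\infty}$). Combined with Theorem~\ref{Theorem 3.5}, this reduces the corollary to producing a bounded linear $u:L_{1}(\mathbb{R})\to l_{1}(\mathbb{R})$ that is not $p$-summing.

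For that, I would take the block-averaging operator $u(f)=\bigl(\int_{n}^{n+1}f(t)\,dt\bigr)_{n\in \mathbb{Z}}$, which has norm at most one. The natural embedding $\iota :l_{1}(\mathbb{Z})\hookrightarrow L_{1}(\mathbb{R})$ defined by $(a_{n})\mapsto \sum a_{n}\chi_{[n,n+1]}$ is isometric and satisfies $u\circ \iota =id_{l_{1}(\mathbb{Z})}$. Since the identity of an infinite-dimensional Banach space is never $p$-summing and $\Pi_{p}$ is an operator ideal, $u$ cannot be $p$-summing. Setting $T:=u\circ \delta_{\mathbb{R}}$ gives a Lipschitz map with $\widehat{T}=u$, completing the construction. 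The only non-routine ingredient is the identification $\mathcal{F}(\mathbb{R})\cong L_{1}(\mathbb{R})$ together with the concrete isometric inclusion of $l_{1}$ as step functions on integer intervals; this is the step on which the whole argument rests.
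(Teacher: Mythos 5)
Your argument is correct and rests on the same mechanism as the paper's: combine Theorem \ref{Theorem 3.5} with the observation (obtained by testing the Farmer--Johnson supremum against $f=id_{\mathbb{R}}$) that every Lipschitz map out of $\mathbb{R}$ is automatically Lipschitz $p$-summing, and then exhibit one whose linearization fails to be $p$-summing. The difference lies in the witness. The paper simply takes $T=\delta_{\mathbb{R}}:\mathbb{R}\rightarrow \mathcal{F}\left( \mathbb{R}\right)$, whose linearization is the identity of the infinite-dimensional space $\mathcal{F}\left( \mathbb{R}\right)$ and hence not $p$-summing; the symbol $l_{1}\left( \mathbb{R}\right)$ in the statement is there being read as $\mathcal{F}\left( \mathbb{R}\right) \cong L_{1}\left( \mathbb{R}\right)$, so no further construction is needed. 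You instead build the block-averaging operator $u:L_{1}\left( \mathbb{R}\right) \rightarrow l_{1}\left( \mathbb{Z}\right)$ and show it is not $p$-summing because it is a left inverse of the isometric embedding of $l_{1}\left( \mathbb{Z}\right)$ as step functions, so that $id_{l_{1}\left( \mathbb{Z}\right)}$ would otherwise be $p$-summing. This costs you an extra (correct) factorization argument, but it buys something the paper's one-line proof does not: your example genuinely lands in a sequence space $l_{1}$, so it survives even if one insists on reading $l_{1}\left( \mathbb{R}\right)$ literally as a sequence space rather than as $L_{1}\left( \mathbb{R}\right)$ (modulo the harmless isometric, norm-one-complemented inclusion $l_{1}\left( \mathbb{Z}\right) \subset l_{1}\left( \mathbb{R}\right)$, which preserves and reflects $p$-summability). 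In both proofs the essential nontrivial input is the identification $\mathcal{F}\left( \mathbb{R}\right) \cong L_{1}\left( \mathbb{R}\right)$, which you correctly single out as the step the whole argument rests on.
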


\begin{proof}
\textbf{\ }We know that $\delta_{\mathbb{R}}:\mathbb{R\rightarrow}\mathcal{F}%
\left( \mathbb{R}\right) $ $(\mathcal{F}\left( \mathbb{R}\right)
=l_{1}\left( \mathbb{R}\right) $ in fact$)$ is Lipschitz $p$-summing. Its
linearization is the identity on $\mathcal{F}\left( \mathbb{R}\right) $
which cannot be $p$-summing because $\mathcal{F}\left( \mathbb{R}\right) $
is infinite-dimensional. Hence, $\delta_{\mathbb{R}}$ is not strictly
Lipschitz $p$-summing.
\end{proof}

If $X$ is a Banach space and $T:X\rightarrow E$ is a linear operator, we
have the following result.

\begin{proposition}
\label{Proposition 3.8}\textit{Let }$X,E$\textit{\ be two Banach spaces and }%
$T:X\rightarrow E$\textit{\ be a linear operator. The following properties
are equivalent.}

$\left( 1\right) $ $T$ \textit{is Lipschitz }$p$\textit{-summing.}

$\left( 2\right) $ $T$ \textit{is }$p$\textit{-summing.}

$\left( 3\right) $ $T$ \textit{is strictly Lipschitz }$p$\textit{-summing.}

\textit{In this case we have} 
\begin{equation*}
\left\Vert T\right\Vert _{\Pi_{p}}=\left\Vert T\right\Vert
_{\Pi_{p}^{L}}=\left\Vert T\right\Vert _{\Pi_{p}^{SL}}.
\end{equation*}
\end{proposition}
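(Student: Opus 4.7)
The plan is to run the cyclic chain of implications $(2)\Rightarrow (3)\Rightarrow (1)\Rightarrow (2)$, carrying an accompanying norm inequality along each arrow so that the three norms collapse to a single value.

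For $(2)\Rightarrow (3)$, since $X$ is a Banach space and $T$ is linear, formula \eqref{1.4} gives the decomposition $\widehat{T}=T\circ \beta _{X}$, where $\beta _{X}:\mathcal{F}(X)\to X$ is a bounded linear operator with $\|\beta _{X}\|\leq 1$. Assuming $T\in \Pi _{p}(X;E)$, the ideal property of $\Pi _{p}$ yields $\widehat{T}\in \Pi _{p}(\mathcal{F}(X);E)$ with $\|\widehat{T}\|_{\Pi _{p}}\leq \|\beta _{X}\|\,\|T\|_{\Pi _{p}}\leq \|T\|_{\Pi _{p}}$. Theorem \ref{Theorem 3.5} then delivers $T\in \Pi _{p}^{SL}(X;E)$ together with the identity $\|T\|_{\Pi _{p}^{SL}}=\|\widehat{T}\|_{\Pi _{p}}\leq \|T\|_{\Pi _{p}}$.

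For $(3)\Rightarrow (1)$, I would invoke the inclusion $\Pi _{p}^{SL}(X;E)\subset \Pi _{p}^{L}(X;E)$ noted earlier in the paper, which arises from the comparison $d_{p}^{L}\leq cs_{p}$ between the two relevant norms on the Lipschitz tensor product (the infimum defining $d_{p}^{L}$ ranges over the strictly larger class of representations in $\mathcal{F}(X)\otimes E$, rather than only molecules $\delta _{(x_{i},y_{i})}\boxtimes e_{i}$); passing to duals gives the norm estimate $\|T\|_{\Pi _{p}^{L}}\leq \|T\|_{\Pi _{p}^{SL}}$.

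The main obstacle is the implication $(1)\Rightarrow (2)$. For this I would appeal to the Farmer--Johnson type result from \cite{4}: a bounded linear operator between two Banach spaces is $p$-summing if and only if it is Lipschitz $p$-summing, and the two norms coincide. This provides $\|T\|_{\Pi _{p}}\leq \|T\|_{\Pi _{p}^{L}}$. Chaining the three inequalities,
\begin{equation*}
\|T\|_{\Pi _{p}^{SL}}\leq \|T\|_{\Pi _{p}}\leq \|T\|_{\Pi _{p}^{L}}\leq \|T\|_{\Pi _{p}^{SL}},
\end{equation*}
forces equality throughout, so the three classes coincide isometrically. The two easy implications use only material already assembled in the paper (formula \eqref{1.4}, Theorem \ref{Theorem 3.5}, and the comparison $d_{p}^{L}\leq cs_{p}$), while the entire depth of the proposition is concentrated in the Farmer--Johnson step that elevates a Lipschitz $p$-summing estimate to a genuine $p$-summing estimate for a linear map.
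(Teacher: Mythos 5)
Your proof is correct and follows essentially the same route as the paper: Farmer--Johnson for the linear equivalence (the paper cites it as \cite{11}, not \cite{4}), the factorization $\widehat{T}=T\circ \beta _{X}$ from \eqref{1.4} together with the ideal property and Theorem \ref{Theorem 3.5} for $(2)\Rightarrow (3)$, and the inclusion $\Pi _{p}^{SL}\subset \Pi _{p}^{L}$ for $(3)\Rightarrow (1)$. Your cyclic chaining of the three norm inequalities to force equality is a slightly tidier way of packaging the same estimates the paper records.
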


\begin{proof}
The equivalence $\left( 1\right) \Leftrightarrow \left( 2\right) $ has been
proved by Farmer and Johnson in \cite{11} and we have $\left\Vert
T\right\Vert _{\Pi _{p}}=\left\Vert T\right\Vert _{\Pi _{p}^{L}}.$ Suppose
that $T$ is $p$-summing, by \eqref{1.4} $\widehat{T}=T\circ \beta _{X}$
which is $p$-summing by the ideal property, consequently $T$ is strictly
Lipschitz $p$-summing and we have%
\begin{equation*}
\left\Vert T\right\Vert _{\Pi _{p}^{SL}}=\left\Vert T\circ \beta
_{X}\right\Vert _{\Pi _{p}}\leq \left\Vert T\right\Vert _{\Pi _{p}}.
\end{equation*}%
The last implication is immediate with $\left\Vert T\right\Vert _{\Pi
_{p}^{L}}\leq \left\Vert T\right\Vert _{\Pi _{p}^{SL}}$.
\end{proof}

In the linear case, every $p$-integral operator linear is $p$-summing. Then,
by Proposition \ref{Proposition 2.12} and Theorem \ref{Theorem 3.5} we
conclude that every strongly Lipschitz $p$-integral operator is strictly
Lipschitz $p$-summing. In the next results, we give some coincidence
situations as in the linear case, for the proof we use the linearization of
both classes and the linear results given in \cite[p 99]{9}.

\begin{corollary}
\label{Corollary 3.9}\textit{Let }$1\leq p<\infty .$\textit{\ Let }$X$%
\textit{\ be a pointed metric space and }$E$\textit{\ be an injective Banach
space. Then}%
\begin{equation*}
\Pi _{p}^{SL}\left( X;E\right) =\mathfrak{I}_{p}^{SL}\left( X;E\right) ,
\end{equation*}%
\textit{with equality of norms.}
\end{corollary}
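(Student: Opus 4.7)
The plan is to reduce the statement to its linear analogue on the free Banach space by linearizing both sides. The author has already flagged this strategy in the sentence immediately preceding the corollary, where it is noted that every strongly Lipschitz $p$-integral operator is strictly Lipschitz $p$-summing (which gives one inclusion for free, and isometrically, since the identity between $\Pi_p$ and $\mathfrak{I}_p$ on injective targets in the linear setting is isometric). So the nontrivial content is the reverse inclusion, together with the equality of norms.

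Concretely, I would start from $T \in \Pi_p^{SL}(X;E)$ and invoke Theorem \ref{Theorem 3.5} to transfer the problem to the linearization: $\widehat{T} \in \Pi_p(\mathcal{F}(X); E)$ with $\|T\|_{\Pi_p^{SL}} = \|\widehat{T}\|_{\Pi_p}$. Since $E$ is injective, the classical linear identity $\Pi_p(F; E) = \mathfrak{I}_p(F; E)$ with equal norms (the result cited from \cite[p.~99]{9}, which works for every domain $F$, in particular $F = \mathcal{F}(X)$) gives $\widehat{T} \in \mathfrak{I}_p(\mathcal{F}(X); E)$ with $\|\widehat{T}\|_{\Pi_p} = \|\widehat{T}\|_{\mathfrak{I}_p}$. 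Finally, Proposition \ref{Proposition 2.12} pulls this back to $X$: $T \in \mathfrak{I}_p^{SL}(X; E)$ with $\|T\|_{\mathfrak{I}_p^{SL}} = \|\widehat{T}\|_{\mathfrak{I}_p}$. Chaining the three equalities produces $\|T\|_{\Pi_p^{SL}} = \|T\|_{\mathfrak{I}_p^{SL}}$, hence the inclusion $\Pi_p^{SL}(X;E) \subset \mathfrak{I}_p^{SL}(X;E)$ isometrically.

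For the reverse inclusion, I would run the same chain in the opposite direction: starting from $T \in \mathfrak{I}_p^{SL}(X; E)$, Proposition \ref{Proposition 2.12} gives $\widehat{T} \in \mathfrak{I}_p(\mathcal{F}(X); E)$ isometrically; the general linear fact that $p$-integral operators are $p$-summing with $\|\cdot\|_{\Pi_p} \leq \|\cdot\|_{\mathfrak{I}_p}$ (no injectivity required here) yields $\widehat{T} \in \Pi_p(\mathcal{F}(X); E)$; and Theorem \ref{Theorem 3.5} converts this back to $T \in \Pi_p^{SL}(X; E)$ with $\|T\|_{\Pi_p^{SL}} \leq \|T\|_{\mathfrak{I}_p^{SL}}$. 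Together with the previous inequality this gives the equality of norms.

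There is no real obstacle here; the corollary is essentially a two-line consequence of Theorem \ref{Theorem 3.5} and Proposition \ref{Proposition 2.12}, with the linear ingredient being the quoted identity $\Pi_p = \mathfrak{I}_p$ on injective Banach spaces. The only point where I would pause is to double-check that the cited linear result indeed gives \emph{isometric} equality (not merely equality of classes), since the corollary claims equality of norms; this is standard in \cite{9} but worth verifying before committing to the proof as written.
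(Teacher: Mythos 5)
Your proposal is correct and follows exactly the route the paper indicates: the paper gives no detailed argument for this corollary, stating only that one linearizes both classes (via Theorem \ref{Theorem 3.5} and Proposition \ref{Proposition 2.12}) and applies the linear coincidence $\Pi_p(F;E)=\mathfrak{I}_p(F;E)$ for injective $E$ from \cite[p.~99]{9}, which is precisely the chain you carry out in both directions. Your closing caution about verifying that the cited linear identity is isometric is well placed, since the corollary claims equality of norms and the paper leaves that point implicit.
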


\begin{corollary}
\label{Corollary 3.10}\textit{Let }$X$\textit{\ be a pointed metric space
and }$E$\textit{\ be a Banach space. Then}%
\begin{equation*}
\Pi _{2}^{SL}\left( X;E\right) =\mathfrak{I}_{2}^{SL}\left( X;E\right) ,
\end{equation*}%
\textit{with equality of norms.}
\end{corollary}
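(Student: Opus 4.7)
The plan is to reduce the statement to the analogous linear identity $\Pi_2(F;G) = \mathfrak{I}_2(F;G)$ (with equality of norms) via the linearization correspondence, exactly as the authors suggest. The two ingredients that make this reduction work have both already been established in the paper: Theorem~\ref{Theorem 3.5} identifies $\Pi_2^{SL}(X;E)$ with $\Pi_2(\mathcal{F}(X);E)$ isometrically through $T \mapsto \widehat{T}$, while Proposition~\ref{Proposition 2.12} identifies $\mathfrak{I}_2^{SL}(X;E)$ with $\mathfrak{I}_2(\mathcal{F}(X);E)$ isometrically through the same correspondence.

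First, I would fix an arbitrary Lipschitz operator $T \in Lip_0(X;E)$ and invoke Theorem~\ref{Theorem 3.5}, which tells us that $T$ is strictly Lipschitz $2$-summing if and only if $\widehat{T} \in \Pi_2(\mathcal{F}(X);E)$, together with the norm equality $\|T\|_{\Pi_2^{SL}} = \|\widehat{T}\|_{\Pi_2}$. Second, I would invoke Proposition~\ref{Proposition 2.12} (for $p=2$) to obtain analogously that $T$ is strongly Lipschitz $2$-integral if and only if $\widehat{T} \in \mathfrak{I}_2(\mathcal{F}(X);E)$, with $\|T\|_{\mathfrak{I}_2^{SL}} = \|\widehat{T}\|_{\mathfrak{I}_2}$.

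At this point the corollary becomes a direct consequence of the classical linear fact that every $2$-summing operator between arbitrary Banach spaces is $2$-integral with the same norm, i.e.\ $\Pi_2(F;G) = \mathfrak{I}_2(F;G)$ isometrically for all Banach spaces $F, G$; this is precisely the linear result cited from \cite[p.~99]{9}. Applied with $F = \mathcal{F}(X)$ and $G = E$, it gives $\widehat{T} \in \Pi_2$ if and only if $\widehat{T} \in \mathfrak{I}_2$, with $\|\widehat{T}\|_{\Pi_2} = \|\widehat{T}\|_{\mathfrak{I}_2}$. Chaining the three equalities
\begin{equation*}
\|T\|_{\Pi_2^{SL}} = \|\widehat{T}\|_{\Pi_2} = \|\widehat{T}\|_{\mathfrak{I}_2} = \|T\|_{\mathfrak{I}_2^{SL}}
\end{equation*}
then yields the claimed isometric coincidence.

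There is essentially no obstacle: the entire content of the corollary lies in the classical linear identity, and the two linearization theorems have already been set up so that this identity transfers verbatim. The only point requiring a little care is to verify that the linear result from \cite{9} applies in full generality (no injectivity or other hypothesis is needed for $p=2$, unlike in Corollary~\ref{Corollary 3.9} where injectivity of $E$ was used for general $p$); once this is recalled, the proof is a one-line substitution.
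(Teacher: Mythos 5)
Your proposal is correct and follows exactly the route the paper intends: the paper states that these coincidence corollaries are proved by linearizing both classes (via Theorem~\ref{Theorem 3.5} and Proposition~\ref{Proposition 2.12}) and invoking the linear coincidence $\Pi_2=\mathfrak{I}_2$ from \cite[p.~99]{9}. Nothing is missing, and your remark that no injectivity hypothesis is needed for $p=2$ is the right point of contrast with Corollary~\ref{Corollary 3.9}.
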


\begin{corollary}
\label{Corollary 3.11}\textit{If }$E$\textit{\ is a subspace of an }$%
\mathcal{L}_{p}$-space, $1\leq p\leq 2$.\textit{\ Then for every pointed
metric space }$X$%
\begin{equation*}
\Pi _{q}^{SL}\left( X;E\right) =\mathfrak{I}_{q}^{SL}\left( X;E\right) =%
\mathfrak{I}_{2}^{SL}\left( X;E\right) ,
\end{equation*}%
\textit{for all }$2\leq q<\infty $\textit{.}
\end{corollary}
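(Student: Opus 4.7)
The strategy is to reduce the statement to its purely linear counterpart by passing to linearizations, and then to invoke the classical linear result cited as \cite[p 99]{9}. The key bridge results are Theorem \ref{Theorem 3.5} and Proposition \ref{Proposition 2.12}, which identify each class of Lipschitz operators with the corresponding class of linear operators acting on the free Banach space $\mathcal{F}(X)$.

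First, I would fix $T \in Lip_0(X;E)$ with $E$ a subspace of some $\mathcal{L}_p$-space, $1 \leq p \leq 2$, and consider its linearization $\widehat{T}:\mathcal{F}(X) \rightarrow E$. By Theorem \ref{Theorem 3.5}, we have $T \in \Pi_q^{SL}(X;E)$ if and only if $\widehat{T} \in \Pi_q(\mathcal{F}(X);E)$, with equal norms. Similarly, Proposition \ref{Proposition 2.12} and its analogue for general $q$ yield
\begin{equation*}
T \in \mathfrak{I}_q^{SL}(X;E) \iff \widehat{T} \in \mathfrak{I}_q(\mathcal{F}(X);E),
\end{equation*}
and likewise for $q=2$, with isometric identifications.

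Next, I would invoke the linear result from \cite[p 99]{9}: if $E$ is a subspace of an $\mathcal{L}_p$-space with $1 \leq p \leq 2$, then for every Banach space $F$ and every $2 \leq q < \infty$,
\begin{equation*}
\Pi_q(F;E) = \mathfrak{I}_q(F;E) = \mathfrak{I}_2(F;E)
\end{equation*}
isometrically. Applying this with $F = \mathcal{F}(X)$ and combining with the identifications above gives the three-term equality of the corollary, with equality of the corresponding norms.

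The proof is essentially a transport along the linearization isomorphism $T \leftrightarrow \widehat{T}$; there is no substantial obstacle, since the heavy lifting (the classical coincidence theorem for subspaces of $\mathcal{L}_p$-spaces with $1 \leq p \leq 2$) is already available in \cite{9}. The only point requiring care is verifying that the norm identities transport through the bridge results as isometries, but this is guaranteed by Theorem \ref{Theorem 2.1}, Theorem \ref{Theorem 3.5}, and Proposition \ref{Proposition 2.12}, each of which states the corresponding identity isometrically.
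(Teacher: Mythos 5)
Your proposal is correct and follows exactly the route the paper intends: the paper gives no separate proof for this corollary, stating only that one linearizes both classes (via Theorem \ref{Theorem 3.5} and Proposition \ref{Proposition 2.12}) and applies the linear coincidence theorem for subspaces of $\mathcal{L}_{p}$-spaces from \cite[p 99]{9} with $F=\mathcal{F}(X)$. The only minor point is that the corollary as stated does not claim equality of norms (unlike Corollaries \ref{Corollary 3.9} and \ref{Corollary 3.10}), so your isometric transport, while valid, proves slightly more than is asserted.
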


We next show a Lipschitz version of well know characterization of an $%
\mathcal{L}_{\infty }$-space which said that a Banach space $X$ is an $%
\mathcal{L}_{\infty }$-space if and only if for every Banach space $E$ and $%
1 $-summing linear operator $T:X\rightarrow E$, $T$ is $1$-integral, see 
\cite[Corollary 6.24]{9} for more details about this characterization.

\begin{theorem}
\label{Theorem 3.12}\textit{Let }$X$\textit{\ be a pointed metric space. The
following properties are equivalent.}

$\left( 1\right) $ \textit{The space} $\mathcal{F}\left( X\right) $ \textit{%
is an} $\mathcal{L}_{\infty}$\textit{-space}.

$\left( 2\right) $ \textit{For all Banach space }$E$\textit{\ we have} $%
\Pi_{1}^{SL}\left( X;E\right) =\mathfrak{I}_{1}^{SL}\left( X;E\right) .$
\end{theorem}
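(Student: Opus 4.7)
The plan is to reduce the Lipschitz equivalence to the known linear characterization of $\mathcal{L}_{\infty}$-spaces by passing systematically through the linearization. The key identifications available are Theorem \ref{Theorem 3.5}, which says $T\in \Pi_{1}^{SL}(X;E)$ if and only if $\widehat{T}\in \Pi_{1}(\mathcal{F}(X);E)$, and Proposition \ref{Proposition 2.12}, which says $T\in \mathfrak{I}_{1}^{SL}(X;E)$ if and only if $\widehat{T}\in \mathfrak{I}_{1}(\mathcal{F}(X);E)$. Combined, these show that the Lipschitz identity $\Pi_{1}^{SL}(X;E)=\mathfrak{I}_{1}^{SL}(X;E)$ is equivalent to the linear identity $\Pi_{1}(\mathcal{F}(X);E)=\mathfrak{I}_{1}(\mathcal{F}(X);E)$, once one checks that every linear operator out of $\mathcal{F}(X)$ arises as $\widehat{T}$ for some $T\in Lip_{0}(X;E)$.

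For the implication $(1)\Rightarrow(2)$, I would take $T\in \Pi_{1}^{SL}(X;E)$; Theorem \ref{Theorem 3.5} gives $\widehat{T}\in \Pi_{1}(\mathcal{F}(X);E)$. Since $\mathcal{F}(X)$ is an $\mathcal{L}_{\infty}$-space, the linear characterization recalled from \cite[Corollary 6.24]{9} yields $\widehat{T}\in \mathfrak{I}_{1}(\mathcal{F}(X);E)$, and then Proposition \ref{Proposition 2.12} gives $T\in \mathfrak{I}_{1}^{SL}(X;E)$. The reverse containment $\mathfrak{I}_{1}^{SL}(X;E)\subset \Pi_{1}^{SL}(X;E)$ follows either from the fact that $\mathfrak{I}_{1}\subset \Pi_{1}$ in the linear case, applied again via the above identifications, or was noted in the paragraph preceding Corollary \ref{Corollary 3.9}.

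For the implication $(2)\Rightarrow(1)$, the surjectivity trick is what makes the argument work: given any Banach space $E$ and any linear operator $u\in \Pi_{1}(\mathcal{F}(X);E)$, I would set $T=u\circ \delta_{X}\in Lip_{0}(X;E)$, so that $\widehat{T}=u$ by uniqueness of linearization. Then $\widehat{T}\in \Pi_{1}(\mathcal{F}(X);E)$ means $T\in \Pi_{1}^{SL}(X;E)$ by Theorem \ref{Theorem 3.5}. Hypothesis $(2)$ then places $T$ in $\mathfrak{I}_{1}^{SL}(X;E)$, and Proposition \ref{Proposition 2.12} returns $u=\widehat{T}\in \mathfrak{I}_{1}(\mathcal{F}(X);E)$. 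This proves $\Pi_{1}(\mathcal{F}(X);E)=\mathfrak{I}_{1}(\mathcal{F}(X);E)$ for every Banach space $E$, and invoking \cite[Corollary 6.24]{9} again shows $\mathcal{F}(X)$ is an $\mathcal{L}_{\infty}$-space.

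The proof is essentially a translation, and the main conceptual obstacle is only to articulate clearly why the Lipschitz quantifier (over all metric targets $E$) produces the same class of linear test operators on $\mathcal{F}(X)$ as the linear quantifier; the surjectivity of the correspondence $T\mapsto \widehat{T}$ from $Lip_{0}(X;E)$ onto $\mathcal{B}(\mathcal{F}(X);E)$, coming from $\widehat{u\circ \delta_{X}}=u$, handles this. No new estimates are required beyond the two equivalences already established and the linear characterization cited from \cite{9}.
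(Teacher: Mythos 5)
Your proof is correct and follows essentially the same route as the paper: both directions pass through the linearization via Theorem \ref{Theorem 3.5} and Proposition \ref{Proposition 2.12}, and the converse uses exactly the paper's device of setting $T=v\circ\delta_{X}$ so that $\widehat{T}=v$, reducing to the linear characterization from \cite[Corollary 6.24]{9}. Your treatment is in fact slightly more complete, since you explicitly justify the containment $\mathfrak{I}_{1}^{SL}(X;E)\subset\Pi_{1}^{SL}(X;E)$, which the paper leaves to the remark preceding Corollary \ref{Corollary 3.9}.
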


\begin{proof}
$\left( 1\right) \Rightarrow\left( 2\right) :$ Suppose that $\mathcal{F}%
\left( X\right) $ is an $\mathcal{L}_{\infty}$-space. Let $E$ be a Banach
space and $T\in\Pi_{1}^{SL}\left( X;E\right) $, then $\widehat{T}:\mathcal{F}%
\left( X\right) \rightarrow E$ is $1$-summing. By the characterization of an 
$\mathcal{L}_{\infty}$-space, $\widehat{T}$ is $1$-integral. Consequently, $%
T $ is in $\mathfrak{I}_{1}^{SL}\left( X;E\right) .$

$\left( 2\right) \Rightarrow\left( 1\right) :$ Let $v:\mathcal{F}\left(
X\right) \rightarrow E$ be $1$-summing linear operator. It suffices to see
that $v$ is $1$-integral. Put $T=v\circ\delta_{X}.$ Then $T$ is strictly
Lipschitz $1$-summing and $\widehat{T}=v$. By $\left( 2\right) ,$ $T$ is
strongly Lipschitz $1$-integral and then its linearization is $1$-integral.
\end{proof}

We finish this section by discussing the Lipschitz tensor norm associated to
Chevet-Saphar norm $g_{p}$. For every\textit{\ }$u=\sum_{i=1}^{n}\delta
_{\left( x_{i},y_{i}\right) }\boxtimes e_{i}\in X\boxtimes E$\textit{\ }we
have%
\begin{equation*}
g_{p}^{L}\left( u\right) =g_{p}\left( \Phi \left( u\right) \right) .
\end{equation*}%
Then%
\begin{equation*}
(X\widehat{\boxtimes }_{g_{p}^{L}}E)^{\ast }=\left( \mathcal{F}\left(
X\right) \widehat{\otimes }_{g_{p}}E\right) ^{\ast }.
\end{equation*}%
Proposition 3.1 in \cite{15} gives that there is an equivalence between a
Lipschitz mapping $T$ and its linearization $\widehat{T}$ for the concept of
strongly $p$-summing. Moreover, we have%
\begin{equation*}
\mathcal{F}_{\mu _{p}}\left( X;E\right) ^{\ast }=\mathcal{D}_{p}^{L}\left(
X;E^{\ast }\right) ,
\end{equation*}%
where the norm $\mu _{p}$ is defined as follows%
\begin{equation*}
\mu _{p}\left( u\right) =\inf \left\{ \left\Vert \delta _{\left(
x_{i},y_{i}\right) }\right\Vert _{l_{p\ast }^{n}\left( \mathcal{F}\left(
X\right) \right) }\left\Vert \left( e_{i}\right) _{i}\right\Vert
_{l_{p}^{n,w}\left( E\right) }:u=\sum_{i=1}^{n}\delta _{\left(
x_{i},y_{i}\right) }e_{i}\right\} .
\end{equation*}%
Combining with Theorem \ref{Theorem 2.1}, we obtain the following
identification.

\begin{theorem}
\label{Theorem 3.13}\textit{Let }$X$\textit{\ be a metric space and }$E$%
\textit{\ be a Banach space. We have}%
\begin{equation*}
(X\widehat{\boxtimes }_{g_{p}^{L}}E)^{\ast }=(\mathcal{F}\left( X\right) 
\widehat{\otimes }_{g_{p}}E)^{\ast }=\mathcal{D}_{p}^{L}\left( X;E^{\ast
}\right) =\mathcal{D}_{p}\left( \mathcal{F}\left( X\right) ;E^{\ast }\right)
.
\end{equation*}
\end{theorem}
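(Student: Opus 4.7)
The plan is to present Theorem \ref{Theorem 3.13} as a chain of three identifications, each of which follows almost immediately from a result already established in the paper, so the work is to verify that the identifications compose compatibly rather than to do anything substantive.

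First, I would handle the equality $(X\widehat{\boxtimes}_{g_p^L}E)^* = (\mathcal{F}(X)\widehat{\otimes}_{g_p}E)^*$. Since $g_p^L$ is defined by $g_p^L(u) = g_p(\Phi(u))$ with $\Phi$ the canonical map of Theorem \ref{Theorem 3.1}, this is exactly the framework of Corollary \ref{Corollary 3.2}, giving $X\widehat{\boxtimes}_{g_p^L}E = \mathcal{F}(X)\widehat{\otimes}_{g_p}E$ isometrically; taking duals yields the first equality.

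Next, for $(\mathcal{F}(X)\widehat{\otimes}_{g_p}E)^* = \mathcal{D}_p(\mathcal{F}(X);E^*)$, I would simply invoke the standard duality for the Chevet-Saphar norm recalled at the start of Section 3.2, namely $\mathcal{D}_p(F;G^*) = (F\widehat{\otimes}_{g_p}G)^*$, applied with $F = \mathcal{F}(X)$ and $G = E$.

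Finally, for $\mathcal{D}_p(\mathcal{F}(X);E^*) = \mathcal{D}_p^L(X;E^*)$, I would combine Theorem \ref{Theorem 2.6}, which gives $\mathcal{D}_p^L(X;E^*) = \mathcal{D}_p \circ Lip_0(X;E^*)$, with Theorem \ref{Theorem 2.1}, which provides the isometric identification $\mathcal{D}_p \circ Lip_0(X;E^*) = \mathcal{D}_p(\mathcal{F}(X);E^*)$ via the correspondence $T \leftrightarrow \widehat{T}$.

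The only potential subtlety, and the main thing worth verifying, is that the four identifications are compatible in the sense that a Lipschitz operator $T \in \mathcal{D}_p^L(X;E^*)$ corresponds to the functional on $X\boxtimes E$ given by $\varphi_T(\sum_i \delta_{(x_i,y_i)} \boxtimes e_i) = \sum_i \langle T(x_i)-T(y_i), e_i\rangle$, and that this coincides, under $\Phi$, with the action of $\widehat{T}$ on $\mathcal{F}(X) \otimes E$. This is straightforward from the definition of $\Phi$ and the identity $\widehat{T}(\delta_{(x,y)}) = T(x) - T(y)$, so there is no real obstacle; the entire proof amounts to assembling these pieces.
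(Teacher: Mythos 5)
Your proposal is correct and follows essentially the same route as the paper, which obtains Theorem \ref{Theorem 3.13} by combining the definition $g_p^L(u)=g_p(\Phi(u))$ with Corollary \ref{Corollary 3.2}, the standard Chevet--Saphar duality $\mathcal{D}_p(F;G^*)=(F\widehat{\otimes}_{g_p}G)^*$, and the identification $\mathcal{D}_p^L(X;E^*)=\mathcal{D}_p\circ Lip_0(X;E^*)=\mathcal{D}_p(\mathcal{F}(X);E^*)$ from Theorems \ref{Theorem 2.6} and \ref{Theorem 2.1}. Your closing compatibility check of the correspondence $T\leftrightarrow\widehat{T}$ under $\Phi$ is a sound addition that the paper leaves implicit.
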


\begin{corollary}
\label{Corollary 3.14}\textit{The norms }$g_{p}^{L}$\textit{\ and }$\mu _{p}$%
\textit{\ are the same.}
\end{corollary}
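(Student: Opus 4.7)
The plan is to derive the identity $g_p^L(u) = \mu_p(u)$ for every $u \in X \boxtimes E$ by pairing both norms with the same dual space, namely $\mathcal{D}_p^L(X; E^*)$.

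For the easy inequality $g_p^L(u) \le \mu_p(u)$, I would argue directly from the definitions. Given any representation $u = \sum_{i=1}^n \delta_{(x_i, y_i)} \boxtimes e_i$, its image $\Phi(u) = \sum_{i=1}^n \delta_{(x_i, y_i)} \otimes e_i$ is one admissible tensor representation of $\Phi(u)$ in $\mathcal{F}(X) \otimes E$. Since $g_p(\Phi(u))$ ranges over all representations $\Phi(u) = \sum_j m_j \otimes e_j'$ with arbitrary $m_j \in \mathcal{F}(X)$, while $\mu_p(u)$ is constrained to the strictly smaller subfamily of delta-representations of $u$, one obtains $g_p^L(u) = g_p(\Phi(u)) \le \mu_p(u)$ directly by passing to the infima.

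For the reverse inequality $\mu_p(u) \le g_p^L(u)$, I would rely on duality. Theorem \ref{Theorem 3.13} identifies $(X \widehat{\boxtimes}_{g_p^L} E)^* = \mathcal{D}_p^L(X; E^*)$ isometrically. On the other hand, the duality from \cite{15} recalled just before this corollary yields $\mathcal{F}_{\mu_p}(X; E)^* = \mathcal{D}_p^L(X; E^*)$ isometrically, where $\mathcal{F}(X;E)$ plays the same role as $X \boxtimes E$. Crucially, in both dualities the action of $T \in \mathcal{D}_p^L(X; E^*)$ on $u = \sum_{i=1}^n \delta_{(x_i, y_i)} \boxtimes e_i$ is the same natural pairing $\sum_i \langle T(x_i) - T(y_i), e_i\rangle$. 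Hence, by the canonical isometric embedding of $(X \boxtimes E, \mu_p)$ into its bidual,
\[
\mu_p(u) = \sup \bigl\{|\langle T, u \rangle| : T \in \mathcal{D}_p^L(X; E^*),\ \|T\|_{\mathcal{D}_p^L} \le 1 \bigr\},
\]
and each such $T$ satisfies $|\langle T, u\rangle| \le g_p^L(u)$ because it is simultaneously a norm-one functional on $(X \widehat{\boxtimes}_{g_p^L} E, g_p^L)$. Taking the supremum yields $\mu_p(u) \le g_p^L(u)$, which combined with the first step gives the equality.

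The principal issue to watch is the bookkeeping of the two dualities: one must confirm that the isometric identifications act through exactly the same bilinear pairing on $(X \boxtimes E) \times \mathcal{D}_p^L(X; E^*)$, so that Hahn-Banach applied to either norm recovers values through the same functional. A secondary check is that $\mu_p$ genuinely separates points of $X \boxtimes E$ and so is a true norm (not merely a seminorm); this is already implicit in the existence of the isometric duality with $\mathcal{D}_p^L(X; E^*)$ guaranteed by \cite{15}, so no independent verification is needed.
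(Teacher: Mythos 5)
Your proof is correct and follows essentially the same route as the paper: the inequality $g_p^L(u)\le\mu_p(u)$ comes straight from the definitions (the $\mu_p$-infimum runs over a smaller family of representations), and the reverse inequality is obtained by writing $\mu_p(u)$ as a supremum over the unit ball of $\mathcal{D}_p^L(X;E^*)\cong\mathcal{D}_p(\mathcal{F}(X);E^*)$ and bounding each pairing by $g_p(\Phi(u))=g_p^L(u)$. The only cosmetic difference is that you invoke Theorem \ref{Theorem 3.13} wholesale where the paper unwinds it into the explicit $g_p$-domination inequality for strongly $p$-summing linearizations.
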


\begin{proof}
Let $u\in X\boxtimes E$. By the definition of $g_{p}^{L}$ we have%
\begin{equation*}
g_{p}^{L}\left( u\right) \leq \mu _{p}\left( u\right) .
\end{equation*}%
On the other hand,%
\begin{align*}
\mu _{p}\left( u\right) & =\sup_{T\in B_{\mathcal{F}_{\mu _{p}}\left(
X;E\right) ^{\ast }}}\left\vert \left\langle T,u\right\rangle \right\vert
=\sup_{\widehat{T}\in B_{\mathcal{D}_{p}\left( \mathcal{F}\left( X\right)
;E^{\ast }\right) }}\left\vert \left\langle \widehat{T},u\right\rangle
\right\vert \\
& =\sup_{\widehat{T}\in B_{\mathcal{D}_{p}\left( \mathcal{F}\left( X\right)
;E^{\ast }\right) }}\left\vert \sum_{i=1}^{n}\left\langle \widehat{T}\left(
\delta _{\left( x_{i},y_{i}\right) }\right) ,e_{i}\right\rangle \right\vert
\\
& \leq \sup_{\widehat{T}\in B_{\mathcal{D}_{p}\left( \mathcal{F}\left(
X\right) ;E^{\ast }\right) }}\left\Vert \widehat{T}\right\Vert _{\mathcal{D}%
_{p}}g_{p}(\sum_{i=1}^{n}\delta _{\left( x_{i},y_{i}\right) }\otimes e_{i})
\\
& \leq g_{p}\left( \Phi \left( u\right) \right) =g_{p}^{L}\left( u\right) .
\end{align*}
\end{proof}

\end{document}